\theoremstyle{plain}
\newtheorem{thm}{Theorem}
\newtheorem{lem}[thm]{Lemma}
\newtheorem{cor}[thm]{Corollary}
\newtheorem{cl}[thm]{Claim}
\newtheorem{prop}[thm]{Proposition}
\newtheorem{conj}[thm]{Conjecture}
\theoremstyle{definition}
\newenvironment{claimproof}[1][\proofname]
  {%
    \proof[#1]%
  }
  {%
    \endproof%
  }
\def\final{0}  
\def\iflong{\iffalse}
\newcommand{\knote}[1]{{\color{red}[{\tiny \textbf{Kristóf:} \bf #1}]\marginpar{\color{red}*}}}
\newcommand{\tnote}[1]{{\color{blue}[{\tiny \textbf{Tamás:} \bf #1}]\marginpar{\color{blue}*}}}
\newcommand{\rnote}[1]{{\color{orange}[{\tiny \textbf{Reviewer:} \bf #1}]\marginpar{\color{orange}*}}}
\newcommand{\knote}[1]{}
\newcommand{\tnote}[1]{}
\newcommand{\rnote}[1]{}
\newcommand{\cI}{\mathcal{I}}
\newcommand{\cX}{\mathcal{X}}
\newcommand{\cY}{\mathcal{Y}}
\newcommand{\cB}{\mathcal{B}}
\newcommand{\cH}{\mathcal{H}}
\newcommand{\ul}[1]{\underline{\mathbf{#1}}}
\title{Exchange distance of basis pairs in split matroids}
\author{
Kristóf Bérczi\thanks{MTA-ELTE Momentum Matroid Optimization Research Group and MTA-ELTE Egerváry Research Group, Department of Operations Research, Eötvös Loránd University, Budapest, Hungary. Email: \texttt{kristof.berczi@ttk.elte.hu}.}
\and
Tamás Schwarcz\thanks{MTA-ELTE Momentum Matroid Optimization Research Group, Department of Operations Research, Eötvös Loránd University, Budapest, Hungary. Email: \texttt{tamas.schwarcz@ttk.elte.hu}.}
}
\date{}
\begin{document}
\maketitle

\begin{abstract}
The basis exchange axiom has been a driving force in the development of matroid theory. However, the axiom gives only a local characterization of the relation of bases, which is a major stumbling block to further progress, and providing a global understanding of the structure of matroid bases is a fundamental goal in matroid optimization. 

While studying the structure of symmetric exchanges, Gabow proposed the problem that any pair of bases admits a sequence of symmetric exchanges. A different extension of the exchange axiom was proposed by White, who investigated the equivalence of compatible basis sequences. These conjectures suggest that the family of bases of a matroid possesses much stronger structural properties than we are aware of.

In the present paper, we study the distance of basis pairs of a matroid in terms of symmetric exchanges. In particular, we give a polynomial-time algorithm that determines a shortest possible exchange sequence that transforms a basis pair into another for split matroids, a class that was motivated by the study of matroid polytopes from a tropical geometry point of view. As a corollary, we verify the above mentioned long-standing conjectures for this large class. Being a subclass of split matroids, our result settles the conjectures for paving matroids as well.

\medskip

\noindent \textbf{Keywords:} Sequential symmetric basis exchange, Split matroids, Paving matroids
\end{abstract}

\section{Introduction}

Matroids are distinguished from ordinary hereditary set systems by their strong structural properties. There are several equivalent ways to define a matroid axiomatically. In terms of bases, a matroid $M$ is a pair $(S,\mathcal{B})$ where $S$ is a \textbf{ground set} and $\mathcal{B}\subseteq 2^S$ is a family of subsets satisfying the \textbf{basis axioms}: (B1) $\mathcal{B}\neq\emptyset$, (B2) for any $A,B\in \mathcal{B}$ and $e\in A- B$ there exists $f\in B- A$ such that $A-e+f\in\mathcal{B}$. Property (B2) is called the \textbf{exchange axiom} and serves as a distinctive feature of matroids. The exchange axiom has a mirror version, the \textbf{co-exchange axiom}, stating that for any $A,B\in \mathcal{B}$ and $f\in B-A$ there exists $e\in A-B$ such that $A-e+f\in\mathcal{B}$. Even more, both versions imply the existence of mutually exchangeable elements, that is, for any $A,B\in \mathcal{B}$ and $e\in A-B$ there exists $f\in B-A$ such that $A-e+f\in\cB$ and $B+e-f\in\mathcal{B}$; this property is referred to as \textbf{symmetric exchange property}. 


Given two bases $A$ and $B$ of a matroid $M$, the exchange axiom implies the existence of a sequence of exchanges that transforms $A$ into $B$, and the shortest length of such a sequence is $|A-B|$. However, the problem becomes significantly more difficult when it comes to the distance of basis pairs. Let $(A_1,A_2)$ be an ordered pair of bases of $M$, and assume that there exist $e\in A_1-A_2$ and $f\in A_2-A_1$ such that both $A'_1\coloneqq A_1-e+f$ and $A'_2\coloneqq A_2+e-f$ are bases. Then we say that the pair $(A'_1,A'_2)$ is obtained from $(A_1,A_2)$ by a \textbf{symmetric exchange}. The \textbf{exchange distance} of two basis pairs $(A_1,A_2)$ and $(B_1,B_2)$ is the minimum number of symmetric exchanges needed to transfer the former into the latter if such a sequence exists and $+\infty$ otherwise. A sequence of symmetric exchanges is called \textbf{strictly monotone for $(A_1,A_2)$ and $(B_1,B_2)$} if each step decreases the difference between the first members of the pairs. The basis pairs are omitted when those are clear from context.

As simple the case of single bases is, so little is known about the exchange distance of basis pairs. Can we characterize those cases when the exchange distance is finite? If the distance is finite, can we give an upper bound on it? Can we find a shortest exchange sequence algorithmically? Given basis pairs $(A_1,A_2)$ and $(B_1,B_2)$ of a rank-$r$ matroid, an obvious lower bound on their exchange distance is $r-|A_1\cap B_1|$. However, it might happen that more symmetric exchanges are needed even if $M$ is a graphic matroid; see \cite{farber1989basis} for a counterexample in $K_4$. 

Similar questions were considered before in different contexts. Gabow~\cite{gabow1976decomposing} studied the structure of symmetric exchanges and showed that for any pair $A,B\in\cB$, there exist partitions $A=X_1\cup\dots\cup X_q$, $B=Y_1\cup\dots\cup Y_q$ and bijections $\varphi^A_i\colon X_i\to Y_i$, $\varphi^B_i\colon Y_i\to X_i$ for $i=1,\dots,q$ such that 
\begin{eqnarray*}
&\left(\bigcup_{j=1}^{i-1}X_j\right)\cup \left((X_i-Z)\cup \varphi^A_i(Z)\right)\cup\left(\bigcup_{j=i+1}^q Y_{j}\right)\in\cB\quad\text{for $i=1,\dots,q$ and $Z\subseteq X_i$, and}&\\
&\left(\bigcup_{j=1}^{i-1}Y_j\right)\cup \left((Y_i-Z)\cup \varphi^B_i(Z)\right)\cup\left(\bigcup_{j=i+1}^q X_{j}\right)\in\cB\quad\text{for $i=1,\dots,q$ and $Z\subseteq Y_i$.}&
\end{eqnarray*}
In other words, the result shows the existence of a sequence of symmetric exchanges of subsets that can be executed element-by-element. In the light of this result, it is natural to ask whether $q$ can be made to take the extreme values, $q=1$ and $q=r$ where $r$ is the rank of the matroid. When $q=1$, then it is not difficult to see that $\varphi^A_1$ and $\varphi^B_1$ can be assumed to be inverses of each other, and the existence of such a bijection for any pair of bases is equivalent to the matroid being strongly base orderable. As strongly base orderable matroids form a proper subclass of matroids, $q=1$ cannot always be achieved. However, the other extreme case remained open, and Gabow~\cite{gabow1976decomposing} formulated the following beautiful problem, later stated as a conjecture by Wiedemann \cite{wiedemann1984cyclic} and by Cordovil and Moreira~\cite{cordovil1993bases}.

\begin{conj}[Gabow] \label{conj:gabow}
Let $A$ and $B$ bases of the same matroid. Then there are orderings $A=(a_1,\dots,a_r)$ and $B=(b_1,\dots,b_r)$ such that $\{a_1,\dots,a_i,b_{i+1},\dots,b_r\}$ and $\{b_1,\dots,b_i,a_{i+1},\dots,a_r\}$ are bases for $i=0,\dots,r$.
\end{conj}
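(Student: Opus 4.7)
The plan is to attack the conjecture in the class of split matroids that the paper targets; the general statement is a long-standing open problem, but the rigid hyperplane structure of split matroids provides the foothold needed to drive an induction.

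Observe first that the conjecture is equivalent to the existence of a strictly monotone symmetric exchange sequence of length $r - |A \cap B|$ from $(B, A)$ to $(A, B)$: elements of $A \cap B$ can be placed in matching positions in the two orderings and contribute only trivial swaps, and each nontrivial swap $a_i \leftrightarrow b_i$ with $a_i \in A \setminus B$, $b_i \in B \setminus A$ is itself a symmetric exchange that reduces the symmetric difference of the first components by two. The task therefore reduces to the one-step claim: there exist $e \in A \setminus B$ and $f \in B \setminus A$ such that $A - e + f$ and $B + e - f$ are both bases and the resulting pair still admits a sequence of the same form to completion.

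I would proceed by induction on $|A \setminus B|$. The inductive step amounts to identifying a symmetric exchange that is compatible with the split matroid structure, which is governed by a distinguished family of non-uniform hyperplanes together with cardinality bounds. The failure modes of a prospective swap $(e, f)$ can be enumerated in terms of these hyperplanes: the swap is forbidden exactly when $A - e + f$ or $B + e - f$ violates the capacity constraint of some distinguished hyperplane. A key structural ingredient of split matroids is that these hyperplanes interact in a very restricted way, so only a bounded number of $(e, f)$ pairs can be simultaneously blocked, and the blocking patterns on $A$ and on $B$ cannot exhaust all candidates at once. Combined with a counting argument on $|A \setminus B|$ and $|B \setminus A|$, this should produce a legal exchange whose execution preserves the structural invariants needed for the induction.

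The main obstacle I anticipate is the extremal configuration in which several distinguished hyperplanes are simultaneously tight on $A$ or on $B$; here a naive greedy exchange may land in a pair to which the inductive hypothesis is not directly applicable. Overcoming this will likely require a refined selection rule: for instance, choosing $(e, f)$ so as to decrease $|A \cap H| - |B \cap H|$ on the hyperplane $H$ where this quantity is most extreme, thereby guaranteeing that the successor pair still satisfies the structural bounds used by the induction. Establishing that such a rule always produces a valid exchange, and designing an algorithm that outputs the whole sequence in polynomial time, is the technical heart of the argument and where I expect most of the work to concentrate.
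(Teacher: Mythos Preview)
Your reformulation in terms of a strictly monotone exchange sequence of length $r-|A\cap B|$ from $(A,B)$ to $(B,A)$ is correct, and the instinct that obstructions are governed by tight hyperedges is exactly right. The gap is in the inductive setup.

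You propose to induct on $|A\setminus B|$: choose a good first exchange $(e,f)$ and apply the hypothesis to the resulting pair. But after one exchange the instance is no longer of Gabow type: you now need a strictly monotone sequence of length $r-1$ from $(A-e+f,\,B+e-f)$ to $(B,A)$, and here $|(A-e+f)\cap B|=1$. The natural strengthening of the hypothesis to arbitrary compatible pairs --- that a strictly monotone sequence of length $r-|A_1\cap B_1|$ always exists --- is \emph{false} already for the graphic matroid of $K_4$, which is sparse paving. So there is no inductive statement of the shape you describe that is simultaneously (i) strong enough to absorb the successor pair and (ii) true. Your ``refined selection rule'' would have to guarantee not just that the successor pair is a split instance (automatic) but that it avoids exactly those configurations witnessing the $K_4$-type obstruction; you give no mechanism for this, and the paper's analysis suggests none exists at the level of a single forward step.

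The paper sidesteps the issue by proving something weaker but more general: for \emph{any} compatible pair of basis pairs in a split matroid, the exchange distance is at most $\min\{r,\,r-|A_1\cap B_1|+1\}$. Gabow then falls out because, after contracting $A\cap B$, the pair $(A,B),(B,A)$ has $A_1\cap B_1=\emptyset$, so the upper bound is exactly $r$ and matches the lower bound. The proof of the general bound is not a forward induction: one runs the greedy strictly monotone sequence as far as possible, and when it stalls one identifies four tight hyperedges $H_1,\dots,H_4$ (your ``extremal configuration''), then \emph{goes back and edits earlier choices} to lengthen the sequence by two. Iterating, one reaches a state where the residual gap $d$ satisfies $d\le 2|A_1\cap B_1|$; in the Gabow case this forces $d=0$, i.e.\ the sequence reaches the end. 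The backtracking step, not a clever forward selection rule, is the technical heart --- and it is precisely what your plan is missing.
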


We call this property the \textbf{sequential symmetric exchange property}. The conjecture is sometimes rephrased using cyclic orderings. Indeed, the statement is equivalent to the following: given two bases $A$ and $B$ of a rank-$r$ matroid, then the elements of $A$ and $B$ have a cyclic ordering such that both $A$ and $B$ form an interval, and any $r$ cyclically consecutive elements form a basis. 

A different extension of the exchange axiom was considered by White~\cite{white1980unique}, who proposed to characterize the equivalence of basis sequences. Let $\cX=(X_1,\dots,X_m)$ be a sequence of bases of a matroid $M$, and assume that there exist $e\in X_i-X_j$, $f\in X_j-X_i$ with $1\leq i<j\leq m$ such that both $X_i-e+f$ and $X_j+e-f$ are bases. The notion of symmetric exchanges can be naturally extended to sequences by saying that $\cX'=(X_1,\dots,X_{i-1},X_i-e+f,X_{i+1},\dots,X_{j-1},X_j+e-f,X_{j+1},\dots,X_m)$ is obtained from $\cX$ by a symmetric exchange. We call two sequences $\cX$ and $\cY$ \textbf{equivalent} if $\cY$ may be obtained from $\cX$ by a composition of symmetric exchanges. Note that for sequences of length two this is equivalent to the exchange distance of the pairs being finite. Furthermore, $\cX$ and $\cY$ are called \textbf{compatible} if $|\{i:\ s\in X_i,1\leq i\leq m\}|=|\{i:\ s\in Y_i,1\leq i\leq m\}|$ for every $s \in S$. Compatibility is obviously a necessary condition for two sequences being equivalent. White~\cite{white1980unique} conjectured that compatibility is also sufficient.

\begin{conj}[White] \label{conj:white}
Two basis sequences $\cX$ and $\cY$ of the same length are equivalent if and only if they are compatible.
\end{conj}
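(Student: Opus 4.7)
The plan is to reduce White's conjecture to the case of basis pairs ($m=2$) and then settle that case for split matroids by a direct structural argument. The forward direction, that equivalent sequences are compatible, is immediate: a symmetric exchange preserves the multiset union $X_1 \uplus \cdots \uplus X_m$. The content lies in the converse.

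For $m=2$, compatibility of $(Y_1, Y_2)$ with $(X_1, X_2)$ is exactly $Y_1 \uplus Y_2 = X_1 \uplus X_2$ as multisets, so this case is precisely the finite exchange distance question for basis pairs raised in the introduction. My approach here is to exploit the structural description of split matroids, whose non-bases are confined to a controlled collection of hyperplane-like sets arising from the splits of the hypersimplex. Given $(A_1, A_2)$ and $(B_1, B_2)$ with the same multiset union, I would argue that one can always find $e \in A_1 \setminus B_1$ and $f \in B_1 \setminus A_1$ such that the symmetric exchange $(A_1, A_2) \mapsto (A_1-e+f,\, A_2+e-f)$ is valid and strictly decreases $|A_1 \triangle B_1|$. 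Iterating this step produces a strictly monotone exchange sequence of length $r - |A_1 \cap B_1|$, settling the $m=2$ case and simultaneously yielding the algorithmic upper bound on exchange distance announced in the abstract.

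For general $m$, I would proceed by induction on $\sum_{i=1}^{m} |X_i \triangle Y_i|$. If this quantity is positive, pick an $i$ with $X_i \neq Y_i$ and an element $e \in X_i \setminus Y_i$. Compatibility forces some $j \neq i$ with $e \in Y_j \setminus X_j$, and a symmetric consideration yields a counterpart element that should move in the opposite direction. The idea is then to perform a symmetric exchange between $X_i$ and $X_j$ within $\cX$ that reduces the measure, possibly after a preparatory exchange aligning the multiset union of $(X_i, X_j)$ with that of $(Y_i, Y_j)$ so that the $m=2$ case may be invoked.

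The hardest part is precisely this reduction step. An exchange correcting positions $i$ and $j$ may damage the alignment at some other position, so a naive induction fails; moreover, the pair $(X_i, X_j)$ need not be compatible with $(Y_i, Y_j)$ to begin with, preventing a direct appeal to the pair result. Overcoming this will likely require using the split matroid structure globally and not merely in the base case. A plausible route is to show that within the equivalence class of any compatible sequence one can always find a symmetric exchange bringing $\cX$ strictly closer to $\cY$, perhaps by routing through a canonical ``normal form'' sequence that exists for split matroids thanks to the simple structure of their bases complementary to the stressed hyperplanes.
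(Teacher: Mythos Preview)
The statement you are addressing is an open conjecture; the paper does not prove it. What the paper establishes is only the special case of basis sequences of length two in split matroids (Corollary~\ref{cor:white}), and that corollary is an immediate consequence of Theorem~\ref{thm:main}. Your proposal should therefore be read as an attempted proof of that special case together with a speculative extension to general $m$.

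For $m=2$ your core claim is wrong. You assert that one can always find $e\in A_1\setminus B_1$ and $f\in B_1\setminus A_1$ with both $A_1-e+f$ and $A_2+e-f$ bases, yielding a strictly monotone exchange sequence of length $r-|A_1\cap B_1|$. The paper explicitly notes that the graphic matroid of $K_4$, which is sparse paving and hence split, provides compatible basis pairs whose exchange distance exceeds $r-|A_1\cap B_1|$ (see \cite{farber1989basis}); in such a situation no strictly monotone sequence of full length exists, so at some stage every candidate pair $(e,f)$ fails. The entire substance of the paper's proof of Theorem~\ref{thm:main} is precisely the analysis of what happens when the greedy monotone exchange gets stuck: one identifies four hyperedges $H_1,H_2,H_3,H_4$ of a non-redundant representation witnessing the obstruction (Claim~\ref{cl:h}), derives tight structural constraints on $A_1',A_2',B_1,B_2$ relative to these hyperedges (Claims~\ref{cl:symdiff}--\ref{cl:z}), and then exhibits an explicit sequence of $d+1$ further exchanges, one of which is \emph{not} monotone, that completes the transformation. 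The ``$+1$'' in the bound $r-|A_1\cap B_1|+1$ is genuinely needed and is what your argument misses.

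For general $m$ you yourself flag that the induction breaks down because repairing positions $i,j$ can damage others and because $(X_i,X_j)$ need not be compatible with $(Y_i,Y_j)$. This is a real obstacle, and the paper makes no claim to overcome it: White's conjecture for sequences of length greater than two remains open even for split matroids.
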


Despite being the center of attention not only in combinatorial optimization but also in algebra due to its connection to toric ideals and Gröbner bases~\cite{blasiak2008toric}, White's conjecture remains open even for sequences of length two. The close relation between the conjectures of White and Gabow is immediate: the latter would imply the former for sequences of the form $(A,B)$ and $(B,A)$. Indeed, a cyclic ordering of the elements defines a series of symmetric exchanges that transforms the sequence $(A,B)$ into the sequence $(B,A)$. 
White's conjecture is also closely related to the connectivity of basis pair graphs, and for basis sequences of length two, it is in fact equivalent to a conjecture of Farber~\cite{farber1989basis,farber1985edge}.



Equitability of matroids is a notion that is lesser known in the optimization community. A matroid $M=(S,\cB)$ is called \textbf{equitable} if for any set $X\subseteq S$ there exists a basis $B\in\cB$ such that $S-B$ is also a basis and $\lfloor|X|/2\rfloor\leq|B\cap X|\leq\lceil|X|/2\rceil$. Somewhat surprisingly, the following simple conjecture is open, see \cite{egres_open_equit}.

\begin{conj}[Equitability of matroids] \label{conj:equitable}
If the ground set of a matroid $M$ can be partitioned into two bases, then $M$ is equitable.
\end{conj}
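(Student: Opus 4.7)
The natural approach is to reduce Conjecture~\ref{conj:equitable} to Gabow's sequential symmetric exchange property (Conjecture~\ref{conj:gabow}). Given a disjoint partition $S=B_1\cup B_2$ into two bases of a rank-$r$ matroid $M$ and a set $X\subseteq S$, the plan is to apply Conjecture~\ref{conj:gabow} to the pair $(B_1,B_2)$ in its cyclic reformulation. This would produce a cyclic ordering $s_1,s_2,\dots,s_{2r}$ of $S$ in which $B_1=\{s_1,\dots,s_r\}$ and $B_2=\{s_{r+1},\dots,s_{2r}\}$ are intervals and every $r$ cyclically consecutive elements form a basis.

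For $i\in\{1,\dots,2r\}$, set $C_i\coloneqq\{s_i,s_{i+1},\dots,s_{i+r-1}\}$ with indices modulo $2r$. Each $C_i$ is a basis by hypothesis, and its complement equals $C_{i+r}$, which is also a basis; hence every $C_i$ is an admissible candidate for the basis $B$ in Conjecture~\ref{conj:equitable}. Since $C_{i+1}=(C_i-s_i)+s_{i+r}$, the function $\varphi(i)\coloneqq|C_i\cap X|$ changes by at most one between consecutive indices. But $\varphi(1)=|B_1\cap X|$ and $\varphi(r+1)=|B_2\cap X|$ sum to $|X|$, so one of them is at least $\lceil|X|/2\rceil$ and the other is at most $\lfloor|X|/2\rfloor$. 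A discrete intermediate value argument along $i=1,\dots,r+1$ then produces an index with $\varphi(i)\in\{\lfloor|X|/2\rfloor,\lceil|X|/2\rceil\}$, and $B\coloneqq C_i$ is the sought basis.

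The main obstacle is that Conjecture~\ref{conj:gabow} is itself open in general, so the reduction above only resolves equitability in matroid classes where Gabow's conjecture is available. Within the present paper, this will be exactly the content of the main result for split matroids, and equitability for that class will then follow as the clean corollary just described. Without access to Gabow's conjecture, an alternative route would be to establish directly that whenever $|B_1\cap X|>\lceil|X|/2\rceil$ there exists a symmetric exchange using one element of $B_1\cap X$ and one element of $B_2\setminus X$, which would allow one to drive $|B_1\cap X|$ down to $\lceil|X|/2\rceil$ step by step; proving such a constrained strengthening of Brualdi's symmetric exchange theorem in full generality, however, appears to be of the same order of difficulty as Gabow's conjecture itself.
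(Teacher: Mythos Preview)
Your reduction is correct and is precisely the route the paper takes: the paper remarks in the introduction that Conjecture~\ref{conj:equitable} is implied by Conjecture~\ref{conj:gabow}, and in Corollary~\ref{cor:equitable} it deduces equitability for split matroids from Corollary~\ref{cor:gabow} in a single sentence, without spelling out the cyclic-ordering plus discrete intermediate value argument you wrote down. Your acknowledgment that the conjecture remains open in general and that the reduction only yields equitability in classes where Gabow's conjecture is available matches the paper's stance exactly.
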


A natural generalization of the problem would be asking for a partition into $k$ bases $B_1,\dots,B_k$ such that $\lfloor|X|/k\rfloor\leq|B_i\cap X|\leq\lceil|X|/k\rceil$. However, it is not difficult to show that the two variants are in fact equivalent, thus it suffices to concentrate on the first one, see \cite{egres_open_equit}. Not only the problem might be of independent combinatorial interest, but it is implied by both Gabow's and White's conjectures, therefore its validity is a precondition of the validity of the other two.

\paragraph{Previous work.}

It was already observed in \cite{gabow1976decomposing} that Conjecture~\ref{conj:gabow} holds for special classes of matroids, such as partition matroids, matching and transversal matroids, and matroids of rank less than $4$. An easy proof shows that it also holds for strongly base orderable matroids. The statement was later proved for graphic matroids by Wiedemann~\cite{wiedemann1984cyclic}, Kajitani, Ueno, and Miyano~\cite{kajitani1988ordering}, and Cordovil and Moreira~\cite{cordovil1993bases}. Kotlar and Ziv~\cite{kotlar2013serial} showed that any two elements of a basis have a sequential symmetric exchange with some two elements of any other basis. At the same time, Kotlar~\cite{kotlar2013circuits} proved that three consecutive symmetric exchanges exist for any two bases of a matroid, and that a full sequential symmetric exchange, of length at most $6$, exists for any two bases of a matroid of rank $5$.  Recently, Kotlar, Roda and Ziv~\cite{kotlar2021sequential} proposed a generalization that would extend the statement in the same way as the result of Greene and Magnanti~\cite{greene1975some} did for symmetric exchanges. Van den Heuvel and Thomass\'e~\cite{van2012cyclic} proposed a relaxation of the problem where the elements of the initial bases $A$ and $B$ do not have to form intervals. A natural extension would be to consider $k$ bases and to find a suitable cyclic ordering of the elements of these $k$ bases combined. This latter problem is wide open even for matroid classes for which the original conjecture is known to hold, including graphic matroids. Kajitani, Ueno and Miyano~\cite{kajitani1988ordering} observed that a necessary condition for the existence of a required cyclic ordering is the matroid $M$ being uniformly dense\footnote{A matroid is uniformly dense if $r\cdot |X|\leq r_M(X)\cdot |S|$ for every $X\subseteq S$, where $r_M$ and $r$ denote the rank function and the rank of $M$, respectively.}, and conjectured that this condition is also sufficient. A matroid admitting a partition of its ground set into bases is uniformly dense, therefore a proof for the conjecture of Kajitani, Ueno and Miyano would imply an affirmative answer to that of van den Heuvel and Thomass\'e as well. In \cite{van2012cyclic}, the conjecture was verified if $|S|$ and $r(S)$ are coprimes.

A routine argument shows that Conjecture~\ref{conj:white} holds for a matroid $M$ if and only if it holds for its dual $M^*$. For basis sequences of length two, Farber, Richter, and Shank~\cite{farber1985edge} verified the statement for graphic and cographic matroids, while Farber~\cite{farber1989basis} proved the conjecture for transversal matroids. For sequences of arbitrary length, the most significant partial result is due to Blasiak~\cite{blasiak2008toric}, who confirmed the conjecture for graphic matroids by showing that the toric ideal of a graphic matroid is generated by quadrics. Kashiwabara~\cite{kashiwabara2010toric} settled the case of matroids of rank at most $3$. Schweig~\cite{schweig2011toric} proved the statement for lattice path matroids, a subclass of transversal matroids. The case of strongly base orderable matroids was solved by Laso{\'n} and Micha{\l}ek~\cite{lason2014toric}. Recently, Blasiak's result was further extended by McGuinness~\cite{mcguinness2020frame} to frame matroids satisfying a linearity condition. Such classes of matroids include graphic matroids, bicircular matroids, signed graphic matroids, and more generally frame matroids obtained from group-labelled graphs. 


Apart from the matroid classes for which Conjecture~\ref{conj:gabow} or \ref{conj:white} was settled, Conjecture~\ref{conj:equitable} was shown to hold for base orderable matroids by  Fekete and Szabó~\cite{fekete2011equitable}. They also studied an extension of the problem where equitability with respect to more than one set is required. Király \cite{egres_open_equit} observed that the equitability of hypergraphic matroids follows from the graphic case. Equitable partitions are closely related to fair representations, introduced by Aharoni, Berger, Kotlar, and Ziv~\cite{aharoni2017faira}. For some positive real $\alpha$, a set $X$ represents another set $A$ $\alpha$-fairly if $|X\cap A|\geq\lfloor\alpha|A|\rfloor$, and $X$ represents $A$ almost $\alpha$-fairly if $|X\cap A|\geq\lfloor\alpha|A|\rfloor-1$. The notion of representation is then extended to partitions: a set is said to represent a partition $A_1,\dots,A_m$ $\alpha$-fairly (almost $\alpha$-fairly) if it represents all $A_i$'s $\alpha$-fairly (almost $\alpha$-fairly, respectively). The authors of~\cite{aharoni2017faira} provided several conjectures and results on fair representations. In particular, their results imply the following relaxation of Conjecture~\ref{conj:equitable}: If the ground set $S$ of a matroid $M$ can be partitioned into two bases, then for any $X\subseteq S$ there exists an independent set $I$ of size at least $r-1$ such that $S-I$ contains a basis, $|I\cap X|\geq\lfloor|X|/2\rfloor-1$, and $|I\cap (S-X)|\geq\lfloor|S-X|/2\rfloor-1$.

Our investigations were motivated by the results of Bonin~\cite{bonin2013basis} who verified Conjectures~\ref{conj:gabow}-\ref{conj:equitable} for sparse paving matroids. On a high level, our proofs resemble the proofs appearing in \cite{bonin2013basis}. Nevertheless, as one would expect, the higher complexity of split matroids compared to that of sparse paving matroids requires significantly more careful and intricate reasonings. 

\paragraph{Split matroids.}

While Conjectures~\ref{conj:gabow}-\ref{conj:equitable} were verified for various matroid classes, 
they remained open for paving matroids, a well-studied class with distinguished structural properties. 
Split matroids were introduced by Joswig and Schr\"oter~\cite{joswig2017matroids} via polyhedral geometry as a generalization of paving matroids, and quickly found applications in tropical geometry. Roughly speaking, a rank-$r$ matroid $M$ is split if the hyperplanes that are used to cut off parts of the matroid base polytope of the rank-$r$ uniform matroid to obtain that of $M$ satisfy a certain compatibility condition. The polyhedral point of view proved to be helpful in understanding the geometry of split matroids, and resulted in fundamental structural results on this class such as being closed both under duality and taking minors. Five forbidden minors were identified already in \cite{joswig2017matroids}, and Cameron and Mayhew~\cite{cameron2021excluded} later verified that the list is complete. 

Motivated by hypergraph representations of paving matroids, B\'erczi, Kir\'aly, Schwarcz, Yamaguchi and Yokoi~\cite{berczi2022hypergraph} initiated a combinatorial study of split matroids. They introduced the notion of \textbf{elementary split matroids}, a class that is a proper subclass of split matroids but includes all connected split matroids. The definition follows a combinatorial approach by setting the independent sets of the matroid to be the family of sets having bounded intersections with certain hyperedges. The proposed class captures all the nice properties of connected split matroids, and is closed not only under duality and taking minors but also truncation. More importantly, elementary split matroids can be characterized by a single forbidden minor. 

\paragraph{Our results.}

Our main contribution is a strong upper bound on the exchange distance of basis pairs in split matroids, together with a polynomial-time algorithm that determines a shortest exchange sequence\footnote{In matroid algorithms, it is usually assumed that the matroids are given by independence oracles. Then the complexity of an algorithm is measured by the number of oracle calls and other conventional elementary steps.}. Given compatible basis pairs $(A_1,A_2)$ and $(B_1,B_2)$ of a rank-$r$ split matroid, we prove that $\min\{r,r-|A_1\cap B_1|+1\}$ such steps always suffice. This result immediately implies that, for split matroids, Conjectures~\ref{conj:gabow} and \ref{conj:equitable} hold in general, and Conjecture~\ref{conj:white} hold for sequences of length two. The graphic matroid of $K_4$ is sparse paving and so it is a split matroid, hence the example of \cite{farber1989basis} shows that the bound above is best possible.

As a corollary, we prove that for split matroids there always exists a strictly monotone sequence of symmetric exchanges of length $r-3|A_1\cap B_1|$. For base orderable split matroids and paving matroids, we improve the bound to $r-2|A_1\cap B_1|$ and $r-|A_1\cap B_1|-2$, respectively. Note that $r-|A_1\cap B_1|$ is clearly an upper bound on the maximum length of a strictly monotone sequence. 

\medskip

The rest of the paper is organized as follows. Basic notation and definitions are given in Section~\ref{sec:pre}. Section~\ref{sec:main} provides a polynomial-time algorithm that determines the exchange distance of basis pairs in split matroids, and discusses Conjectures~\ref{conj:gabow}-\ref{conj:equitable} for this class. Section~\ref{sec:monotone} provides lower bounds on the maximum length of a strictly monotone sequence of symmetric exchanges in various settings.

\section{Preliminaries}
\label{sec:pre}

Throughout, we denote by $S$ the ground set of a matroid. For subsets $X,Y\subseteq S$, 
the \textbf{difference} of $X$ and $Y$ is denoted by $X-Y$. If $Y$ consists of a single element $y$, then $X-\{y\}$ and $X\cup \{y\}$ are abbreviated as $X-y$ and $X+y$, respectively. 
An ordered sequence consisting of elements of $S$ is denoted by $\ul{x}=(x_1,\dots,x_s)$.

For basic definitions on matroids, we refer the reader to~\cite{oxley2011matroid}. 
Let $M=(S,\cB)$ be a rank-$r$ matroid on $S$, where $\cB$ is the family of bases of $M$. We denote the \textbf{rank function} of $M$ by $r_M$. A matroid of rank $r$ is called \textbf{paving} if every set of size at most $r-1$ is independent, or in other words, every circuit of the matroid has size at least $r$. A matroid is \textbf{sparse paving} if it is both paving and dual to a paving matroid. For a non-negative integer $r$, a set $S$ of size at least $r$, and a (possibly empty) family $\mathcal{H}=\{H_1,\dots,H_q\}$ of proper subsets of $S$ such that $|H_i\cap H_j|\leq r-2$ for $1\leq i < j\leq q$, the set system $\mathcal{I}=\{X\subseteq S\mid |X|\leq r,\ |X\cap H_i|\leq r-1\ \text{for } i=1,\dots,q\}$ forms the family of independent sets of a rank-$r$ paving matroid, and in fact every paving matroid can be obtained in this form, see \cite{hartmanis1959lattice,welsh1976matroid}. 

Joswig and Schr\"oter originally defined split matroids via polyhedral geometry. They showed that split matroids are closed under taking minors~\cite[Proposition 44]{joswig2017matroids}, and that it suffices to concentrate on the connected case~\cite[Proposition 15]{joswig2017matroids}.

\begin{prop}[Joswig and Schr\"oter]\label{prop:conn}
\mbox{}

\begin{enumerate}[label={(\alph*)}] \itemsep0em
    \item The class of split matroids is minor-closed. \label{it:min}
    \item  A matroid is a split matroid if and only if at most one connected component is a non-uniform split matroid and all other components are uniform. \label{it:conn}
\end{enumerate}
\end{prop}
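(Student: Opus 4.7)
The plan is to work directly with the polyhedral definition of split matroids indicated in the introduction. Recall that the base polytope of the rank-$r$ uniform matroid $U_{r,|S|}$ is the hypersimplex $\Delta(r,|S|)$, and a rank-$r$ matroid $M$ on $S$ is split when its base polytope is obtained from $\Delta(r,|S|)$ by cutting off regions with a collection of hyperplanes, subject to a pairwise compatibility condition that forces any two such hyperplanes to meet $\Delta(r,|S|)$ only on its boundary. The whole proof rests on tracking how this family of splitting hyperplanes behaves under the geometric operations that correspond to deletion, contraction, and direct sum.

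For part~\ref{it:min}, it suffices to handle a single deletion $M \setminus e$ and a single contraction $M/e$ and then iterate. When $e$ is neither a loop nor a coloop, the base polytope of $M \setminus e$ (respectively $M/e$) is, up to the obvious projection forgetting the $e$-coordinate, the face of the base polytope of $M$ obtained by setting $x_e = 0$ (respectively $x_e = 1$). These faces sit inside the smaller hypersimplices on the ground set $S - e$. The key step is to argue that a splitting hyperplane of $\Delta(r,|S|)$ restricts either to a splitting hyperplane of the smaller hypersimplex or to a trivial constraint, and that pairwise compatibility is preserved under restriction, since an interior intersection of two restrictions would lift back to an interior intersection of the original hyperplanes. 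Loops and coloops are handled separately, as they correspond to taking a direct sum with a one-element uniform matroid and do not affect splitness.

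For part~\ref{it:conn}, the backward direction is a product polytope computation: for $M = M_1 \oplus \cdots \oplus M_k$ with only $M_1$ possibly non-uniform, the base polytope $P(M) = P(M_1) \times \Delta(r_2,n_2) \times \cdots \times \Delta(r_k,n_k)$ inherits a splitting family from the hyperplanes realizing the split structure of $M_1$, and the uniform factors introduce no obstacles to compatibility. The forward direction is the main obstacle. Assume two distinct components $M_i, M_j$ are non-uniform. Each contributes at least one essential splitting hyperplane $H_i, H_j$ meeting the relative interior of the corresponding factor hypersimplex. When lifted through the product structure into $\Delta(r,|S|)$, these hyperplanes are defined on disjoint blocks of coordinates, so their intersection meets the relative interior of the ambient hypersimplex, violating the compatibility condition. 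This forces all but at most one component to be uniform, completing the dichotomy.
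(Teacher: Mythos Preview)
This proposition is not proved in the present paper at all: it is stated with attribution to Joswig and Schr\"oter, with explicit pointers to \cite[Proposition~44]{joswig2017matroids} and \cite[Proposition~15]{joswig2017matroids}, and no argument is supplied here. So there is no ``paper's own proof'' against which to compare your attempt.

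Your outline follows the polyhedral viewpoint that Joswig and Schr\"oter themselves adopt, so in spirit you are sketching the original source rather than giving an alternative route. As a sketch it is reasonable, but several steps would need genuine work before it counts as a proof. In part~\ref{it:min}, the assertion that a split hyperplane of $\Delta(r,|S|)$ restricts to either a split of the smaller hypersimplex or a trivial constraint, with compatibility preserved, is precisely the content of the result and requires a case analysis rather than a sentence. In part~\ref{it:conn}, the forward direction hides a real subtlety: once $M$ is disconnected, the base polytope $P(M)$ is no longer full-dimensional inside $\Delta(r,|S|)$ (its dimension drops by one for each additional component), so one must be precise about which hyperplanes count as splits and how the separator hyperplanes interact with flacet hyperplanes of a single non-uniform component in the backward direction. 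Your claim that two flacet hyperplanes supported on disjoint coordinate blocks automatically meet the relative interior of $\Delta(r,|S|)$ is the right idea, but it still needs the numerical check that an interior point satisfying both equalities actually exists, which depends on the rank and size parameters of the flacets involved.
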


In \cite{berczi2022hypergraph}, a proper subclass called \textbf{elementary split matroids} was introduced which includes all connected split matroids. The main advantage of the proposed subclass is that it admits a hypergraph characterization similar to that of paving matroids. Namely, let $S$ be a ground set of size at least $r$, $\cH=\{H_1,\dots, H_q\}$ be a (possibly empty) collection of subsets of $S$, and $r, r_1, \dots, r_q$ be non-negative integers satisfying
\begin{align}
|H_i \cap H_j| &\le r_i + r_j -r\ \text{for $1 \le i < j \le q$,}\tag*{(H1)}\label{eq:h1}\\
|S-H_i| + r_i &\ge r\ \text{for $i=1,\dots, q$.} \tag*{(H2)}\label{eq:h2}
\end{align}
Then $\cI=\{X\subseteq S\mid |X|\leq r,\ |X\cap H_i|\leq r_i\ \text{for $1\leq i \leq q$}\}$ forms the family of independent sets of a rank-$r$ matroid $M$ with rank function $r_M(Z)=\min\big\{r,|Z|,\min_{1\leq i\leq q}\{|Z-H_i|+r_i\}\big\}$. Matroids that can be obtained in this form are called \textbf{elementary split matroids}. It was observed already in \cite{berczi2022hypergraph} that the underlying hypergraph can be chosen in such a way that 
\begin{align}
r_i &\le r-1\  \text{for $i=1,\dots, q$,} \tag*{(H3)}\label{eq:h3}\\
|H_i| &\ge r_i+1\  \text{for $i=1,\dots, q$.} \tag*{(H4)}\label{eq:h4}
\end{align}
Therefore, we call the representation \textbf{non-redundant} if all of \ref{eq:h1}--\ref{eq:h4} hold. A set $F\subseteq S$ is called \textbf{$H_i$-tight} if $|F\cap H_i|=r_i$.
Finally, we will use the following result from~\cite[Theorem 11]{berczi2022hypergraph} that establishes a connection between elementary and connected split matroids.

\begin{prop}[B\'erczi, Kir\'aly, Schwarcz, Yamaguchi and Yokoi]\label{prop:esplit}
Connected split matroids form a subclass of elementary split matroids.
\end{prop}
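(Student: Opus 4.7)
The plan is to start from the polyhedral definition of a split matroid and reverse-engineer a hypergraph representation satisfying conditions \ref{eq:h1}--\ref{eq:h4}. Let $M$ be a connected split matroid of rank $r$ on ground set $S$ with base polytope $P(M)$. By Joswig and Schr\"oter's construction, $P(M)$ arises from the hypersimplex $\Delta(r,S)=\{x\in[0,1]^S:\sum_{e\in S}x_e=r\}$ by intersecting with a family of half-spaces $\{x:\sum_{e\in H_i}x_e\le r_i\}$, $i=1,\dots,q$, where each $(H_i,r_i)$ encodes a matroid split of the hypersimplex and the splits are pairwise compatible. I would take this list $\{(H_i,r_i)\}_{i=1}^q$ as the candidate hypergraph.

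First I would verify that the data can be chosen to satisfy \ref{eq:h1} and \ref{eq:h2}. Condition \ref{eq:h2}, $|S-H_i|+r_i\ge r$, is necessary and sufficient for the $i$-th inequality to admit at least one feasible vertex $\chi_X$ of $\Delta(r,S)$; if $|S-H_i|+r_i<r$ then $|X\cap H_i|>r_i$ for every $r$-set $X$, and the inequality may be discarded as trivially cutting off the entire hypersimplex. Condition \ref{eq:h1}, $|H_i\cap H_j|\le r_i+r_j-r$, should emerge as the algebraic form of polyhedral compatibility: the two hyperplanes $\sum_{e\in H_i}x_e=r_i$ and $\sum_{e\in H_j}x_e=r_j$ are non-crossing inside $\Delta(r,S)$ precisely when no $r$-set $X$ satisfies $|X\cap H_i|>r_i$ together with $|X\cap H_j|<r_j$ (or vice versa). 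A short counting argument on how $X$ distributes among $H_i\cap H_j$, $H_i-H_j$, $H_j-H_i$, and $S-(H_i\cup H_j)$ rewrites this non-existence exactly as $|H_i\cap H_j|\le r_i+r_j-r$. Conditions \ref{eq:h3} and \ref{eq:h4} are enforced by discarding any hyperedge with $r_i\ge r$ or $|H_i|\le r_i$ as redundant.

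Once \ref{eq:h1} and \ref{eq:h2} hold, the polytope $P'\coloneqq\Delta(r,S)\cap\bigcap_{i=1}^q\{x:\sum_{e\in H_i}x_e\le r_i\}$ equals $P(M)$ by construction. Its $0/1$-vertices are precisely the characteristic vectors $\chi_X$ of $r$-sets $X$ with $|X\cap H_i|\le r_i$ for every $i$, which, by the hypergraph description recalled before the statement, are exactly the bases of the elementary split matroid $M'$ associated with $(S,r,\{H_1,\dots,H_q\},r_1,\dots,r_q)$. Since a matroid is uniquely determined by its base polytope, $M=M'$, so $M$ is elementary split. The role of connectedness here is to guarantee that the polyhedral picture of $M$ is captured by a single family of compatible splits; the disconnected case is excluded because, as the example of the direct sum of three non-trivial uniform matroids shows, the resulting arithmetic relations among the rank parameters of the components may violate \ref{eq:h1}.

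The main obstacle I anticipate is the careful verification that polyhedral compatibility of two matroid splits reduces exactly to \ref{eq:h1}, in both directions: from an arithmetic violation one must exhibit an explicit $r$-set $X$ witnessing a geometric crossing, and from non-crossing one must extract \ref{eq:h1}. This is a finite but somewhat delicate case analysis, after which identifying $P(M)$ with $P(M')$ and invoking the uniqueness of the matroid from its base polytope is routine.
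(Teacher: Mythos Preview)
The paper does not prove this proposition. It is quoted as Theorem~11 of the cited work \cite{berczi2022hypergraph} and is invoked as a black box in the proof of Theorem~\ref{thm:main} to guarantee that a connected split matroid admits a non-redundant hypergraph representation. There is therefore no proof in the present paper against which to compare your proposal.

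Your outline is a plausible polyhedral route to the result, and you have correctly isolated the one genuinely non-trivial step: showing that pairwise compatibility of the split hyperplanes of the hypersimplex reduces to \ref{eq:h1}. That reduction is precisely where connectedness has to enter, since without it a direct sum of three or more non-trivial uniform matroids is split but not elementary split, as you note. Whether the argument in \cite{berczi2022hypergraph} follows this polyhedral route or a purely combinatorial one (for instance via cyclic flats) cannot be determined from the present paper.
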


\section{Exchange distance of basis pairs}
\label{sec:main}

Let $(A_1,A_2)$ and $(B_1,B_2)$ be compatible basis pairs of a rank-$r$ split matroid $M$, that is, $A_1\cap A_2=B_1\cap B_2$ and $A_1\cup A_2=B_1\cup B_2$. The goal of the current section is to give an upper bound on the number of symmetric exchanges needed to transform $(A_1,A_2)$ into $(B_1,B_2)$. The high-level idea of the proof is as follows. We recursively try to pair up the elements of $A_1\cap B_2$ and $A_2\cap B_1$ in such a way that each pair $x_i,y_i$ corresponds to a symmetric exchange between bases $A_1-\{x_1,\dots,x_{i-1}\}\cup\{y_1,\dots,y_{i-1}\}$ and $A_2-\{y_1,\dots,y_{i-1}\}\cup\{x_1,\dots,x_{i-1}\}$. If no such pair exists, then we identify the obstacle blocking every possible exchange in terms of hyperedges appearing in a non-redundant representation of $M$. At this point we show that there exists a symmetric exchange of elements from $A_1\cap B_1$ and $A_2\cap B_1$ that can be followed by a complete sequence of symmetric exchanges between elements of $A_1\cap B_2$ and $A_2\cap B_1$, leading to the $r-|A_1\cap B_1|+1$ bound. 

In our proof, we will rely on the following simple technical lemma several times. 

\begin{lem}\label{lem:tight}
Let $M$ be a rank-$r$ elementary split matroid with a non-redundant representation $\cH=\{H_1,\dots,H_q\}$ and $r,r_1,\dots,r_q$. Let $F$ be a set of size $r$.
\begin{enumerate}[label={(\alph*)}] \itemsep0em
    \item If $F$ is $H_i$-tight for some index $i$ then $F$ is a basis of $M$.\label{it:basis}
    \item If $F$ is both $H_i$-tight and $H_j$-tight for distinct indices $i$ and $j$ then $H_i\cap H_j\subseteq F\subseteq H_i\cup H_j$.\label{it:f}
\end{enumerate} 
\end{lem}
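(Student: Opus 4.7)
The plan is to directly use the definition of independent sets together with the non-redundancy inequality \ref{eq:h1} via a simple counting argument that splits $F \cap H_j$ into pieces inside and outside $H_i$.

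For part \ref{it:basis}, the goal is to verify $|F \cap H_j| \le r_j$ for every other index $j$, since then $F$ will be an independent set of size $r$, hence a basis. The key bound I would write is
\[
|F \cap H_j| \;\le\; |F \cap H_i \cap H_j| + |(F - H_i) \cap H_j| \;\le\; |H_i \cap H_j| + |F - H_i|.
\]
Now $|F - H_i| = |F| - |F \cap H_i| = r - r_i$ by the hypothesis, and $|H_i \cap H_j| \le r_i + r_j - r$ by \ref{eq:h1}. Adding these gives $|F \cap H_j| \le r_j$, as required. The case $j = i$ is immediate, and together with $|F| = r$ this shows $F \in \cI$ with $|F| = r$, so $F$ is a basis.

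For part \ref{it:f}, I would examine the equality case of the same chain of inequalities. Since $|F \cap H_j| = r_j$, every inequality above must hold with equality. Equality in $|F \cap H_i \cap H_j| \le |H_i \cap H_j|$ forces $H_i \cap H_j \subseteq F$, giving the first containment. Equality in $|(F - H_i) \cap H_j| \le |F - H_i|$ forces $F - H_i \subseteq H_j$, which is the same as $F \subseteq H_i \cup H_j$, giving the second. (As a by-product one also reads off $|H_i \cap H_j| = r_i + r_j - r$, i.e.\ \ref{eq:h1} is tight for $i,j$, although this is not needed in the statement.)

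There is no real obstacle here: the argument is a two-line counting identity combined with \ref{eq:h1}, and conditions \ref{eq:h2}--\ref{eq:h4} are not used. The only thing to be careful about is the bookkeeping in the split $F \cap H_j = (F \cap H_i \cap H_j) \sqcup ((F - H_i) \cap H_j)$, so that the equality case in part \ref{it:f} can be read off cleanly term by term.
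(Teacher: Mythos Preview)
Your proof is correct and is essentially the same argument as the paper's. The paper writes the identity via inclusion--exclusion, $|F\cap H_j|=|F\cap(H_i\cup H_j)|+|F\cap(H_i\cap H_j)|-|F\cap H_i|$, and then bounds $|F\cap(H_i\cup H_j)|\le|F|$ and $|F\cap(H_i\cap H_j)|\le|H_i\cap H_j|\le r_i+r_j-r$; your disjoint split $F\cap H_j=(F\cap H_i\cap H_j)\sqcup((F-H_i)\cap H_j)$ is just a rearrangement of the same computation, and the equality analysis for part~\ref{it:f} is identical. (One cosmetic point: your first displayed ``$\le$'' is in fact an equality, as you note yourself at the end.)
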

\begin{proof}
Assume that $F$ is $H_i$-tight. Then, by \ref{eq:h1}, for any index $j\neq i$ we get
\begin{align*}
    |F\cap H_j|
    {}&{}= 
    |F\cap (H_i\cup H_j)|+|F\cap (H_i\cap H_j)|-|F\cap H_i|\\
    {}&{}\leq 
    |F|+|H_i\cap H_j|-r_i\\
    {}&{}\leq 
    r+(r_i+r_j-r)-r_i\\
    {}&{}=
    r_j,
\end{align*}
showing the $F$ is indeed a basis. If $F$ is also $H_j$-tight, then equality holds throughout, implying $H_i\cap H_j\subseteq F\subseteq H_i\cup H_j$. 
\end{proof}

To avoid referring to part \ref{it:basis} of the lemma too often, we implicitly consider sets of size $r$ that are tight with respect to one of the hyperedges to be bases. Now we turn to the proof of the main result of the paper.

\begin{thm}\label{thm:main}
Let $(A_1,A_2)$ and $(B_1,B_2)$ be compatible basis pairs of a rank-$r$ split matroid $M$. Then the exchange distance of $(A_1,A_2)$ and $(B_1,B_2)$ is at most $\min\{r,r-|A_1\cap B_1|+1\}$.
\end{thm}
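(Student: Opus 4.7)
The plan is to argue by induction on $r - k$, where $k = |A_1 \cap B_1|$, using a hypergraph representation. First I reduce to the case of a connected non-uniform split matroid: by Proposition~\ref{prop:conn}\ref{it:conn}, $M$ decomposes as a direct sum of uniform matroids and at most one non-uniform connected split summand, and both compatibility of basis pairs and symmetric exchanges respect this decomposition. For a uniform matroid of rank $r'$, any compatible basis pair can be transformed into another by $r' - |A_1 \cap B_1|$ greedy element-by-element symmetric exchanges (every swap yields a basis), which already meets the target bound. A short computation shows that summing the component bounds keeps us inside $\min\{r, r - k + 1\}$, so it suffices to prove the theorem for a connected non-uniform split matroid, where Proposition~\ref{prop:esplit} provides a non-redundant hypergraph representation $\cH = \{H_1, \dots, H_q\}$ with capacities $r_1, \dots, r_q$ satisfying \ref{eq:h1}--\ref{eq:h4}.

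Set $X := A_1 \setminus B_1$ and $Y := A_2 \setminus B_2$; compatibility yields $X \subseteq B_2$, $Y \subseteq B_1$, and $|X| = |Y| = r - k$. Any symmetric exchange that increases the intersection with $B_1$ must swap some $x \in X$ with some $y \in Y$. I call such a pair \emph{progressive} if additionally both $A_1 - x + y$ and $A_2 + x - y$ are bases. If a progressive pair exists, then the updated compatible pair $(A_1 - x + y,\, A_2 + x - y)$ has intersection with $B_1$ of size $k + 1$, and the inductive hypothesis delivers at most $r - k - 1$ further exchanges, for $r - k$ in total — well within the stated bound. The remainder of the argument handles the case in which no progressive pair exists.

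In that obstructed case, every $(x, y) \in X \times Y$ has a witnessing tight hyperedge: either an $A_1$-tight $H_i$ with $y \in H_i \not\ni x$, or an $A_2$-tight $H_j$ with $x \in H_j \not\ni y$. The goal now is a rigidity lemma: by iterating Lemma~\ref{lem:tight}\ref{it:f} and using \ref{eq:h1} to control pairwise intersections of tight hyperedges, I aim to show that the obstruction forces an essentially unique configuration, namely a single $A_1$-tight hyperedge $H_i$ with $Y \subseteq H_i$ and $X \cap H_i = \emptyset$ (possibly together with a mirror $A_2$-tight $H_j$ with $X \subseteq H_j$ and $Y \cap H_j = \emptyset$). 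Under such a rigid configuration I then produce $x' \in A_1 \cap B_1$ and $y \in Y$ for which $(x', y)$ is a valid symmetric exchange from $(A_1, A_2)$ — this is the single ``wasted'' step — and verify that after performing it the hyperedge $H_i$ is no longer $A_1'$-tight (because $y$ moved out of $A_1'$ while $x'$ need not lie in $H_i$). Consequently $(A_1', A_2')$ admits a full sequence of $r - k$ progressive exchanges leading to $(B_1, B_2)$, giving exchange distance at most $r - k + 1$. Establishing the rigidity lemma and producing the correct $x'$ and $y$ while juggling all four of $A_1, A_2, B_1, B_2$ against the hyperedges of $\cH$ is the main technical obstacle of the proof.

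To complete the $\min\{r, r - k + 1\}$ bound I handle the edge case $k = 0$, in which $r - k + 1 = r + 1 > r$. Here compatibility forces $A_1 = B_2$ and $A_2 = B_1$, so the task reduces to interchanging an ordered basis pair. I plan to show that in this highly symmetric situation the ``wasted'' step is never forced: the argument of the obstructed case can be sharpened because the extra room afforded by $A_1 \cup A_2 = B_1 \cup B_2$ in \ref{eq:h1}--\ref{eq:h2} always produces a progressive pair directly. Consequently $r$ progressive exchanges suffice, yielding the bound $r$ rather than $r + 1$ and completing the proof.
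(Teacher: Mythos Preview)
Your reduction to the connected case and the greedy ``progressive exchange'' framing match the paper's setup, but the heart of your plan---the rigidity lemma---is structurally wrong. When no progressive pair exists, the obstruction is \emph{not} captured by a single $A_1$-tight hyperedge containing all of $Y$ and avoiding $X$ (with perhaps one mirror on the $A_2$ side). The paper's Claim~\ref{cl:h} shows that one is forced to have \emph{four} pairwise distinct hyperedges $H_1,H_2,H_3,H_4$, with $A_1'$ tight on both $H_1$ and $H_3$ and $A_2'$ tight on both $H_2$ and $H_4$; Claim~\ref{cl:symdiff} then pins down $A_1'\cap B_2\subseteq H_{1,2}\cup H_{3,4}$ and $A_2'\cap B_1\subseteq H_{1,4}\cup H_{2,3}$. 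In particular, the elements of $X$ \emph{do} lie in the $A_1$-tight hyperedges (half in $H_1$, half in $H_3$), so your picture ``$X\cap H_i=\emptyset$'' cannot hold. Consequently your post-wasted-step mechanism also fails: moving one $y\in Y$ into $A_1$ cannot break $H_i$-tightness (indeed, for $A_1'=A_1-x'+y$ to be a basis with $y\in H_i$ you need $x'\in H_i$, and then $A_1'$ remains $H_i$-tight), so there is no reason the remaining $r-k$ exchanges become progressive.

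The paper's resolution is quite different from a single wasted step followed by progressive ones. It first pushes the greedy monotone sequence as far as possible (Claim~\ref{cl:schwarcz} even lets you lengthen it by two in certain sub-cases), leaving $d$ elements on each side; Claim~\ref{cl:tight} and Claim~\ref{cl:d} then show $d\le 2|A_1\cap B_1|$, which in particular handles your $k=0$ worry automatically (if $d>0$ then $k>0$). Finally, Claim~\ref{cl:z} locates an auxiliary element $z\in A_1'\cap B_1$ with a specific containment pattern in $H_1,\dots,H_4$, and $z$ is shuttled out and back in across a hand-built sequence of $d+1$ exchanges that alternately keeps one of the four hyperedges tight at every step. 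So the ``extra'' exchange is not a one-off fix at the start but is woven through the last $d+1$ steps; any attempt to prove the theorem will need to discover and exploit this four-hyperedge structure rather than the two-hyperedge picture you sketched.
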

\begin{proof}
Recall that $(A_1,A_2)$ and $(B_1,B_2)$ are compatible if $A_1\cap A_2=B_1\cap B_2$ and $A_1\cup A_2=B_1\cup B_2$. We start with two simplifications. First, we may assume that $A_1\cap A_2=\emptyset$. Indeed, if $A_1\cap A_2\neq\emptyset$, then let $A_1'\coloneqq A_1-A_2$, $A_2'\coloneqq A_2-A_1$, $B_1'\coloneqq B_1-B_2$, $B_2'\coloneqq B_2-B_1$ and let $M'$ denote the matroid obtained from $M$ by contracting $A_1\cap A_2$. By Proposition~\ref{prop:conn}\ref{it:min}, $M'$ is also a split matroid in which both $A_1',A_2'$ and $B_1',B_2'$ are disjoint bases. Note that the rank of $M'$ is $r'\coloneqq r-|A_1\cap A_2|$. As a sequence of symmetric exchanges that transforms $(A_1',A_2')$ into $(B_1',B_2')$ also transforms $(A_1,A_2)$ into $(B_1,B_2)$, the claim follows by $\min\{r',r'-|A_1'\cap B_1'|+1\}\le \min\{r,r-|A_1\cap B_1|+1\}$.

Second, it suffices to verify the statement for connected split matroids. Indeed, if $M$ is not connected, then all but at most one of its components are uniform matroids by Proposition~\ref{prop:conn}\ref{it:conn}. Let $M_0=(S_0,\cB_0)$ denote, if exists, the non-uniform component, and let $M_1=(S_1,\cB_1),\dots,M_t=(S_t,\cB_t)$ be the uniform connected components of $M$. Define $A_1^i\coloneqq A_1\cap S_i$, $A_2^i\coloneqq A_2\cap S_i$, $B_1^i\coloneqq B_1\cap S_i$, $B_2^i\coloneqq B_2\cap S_i$ for $i=0,\dots,t$. Note that $A_1^i$, $A_2^i$, $B_1^i$ and $B_2^i$ are bases of $M_i$. Take a sequence of at most $\min\{r_0,r_0-|A_1^0\cap B_1^0|+1\}$ symmetric exchanges that transforms $(A^0_1,A^0_2)$ into $(B^0_1,B^0_2)$, where $r_0$ is the rank of $M_0$. As $M_i$ is a uniform matroid, it is easy to see that the exchange distance of $(A_1^i,A_2^i)$ and $(B_1^i,B_2^i)$ is exactly $r_i-|A^i_1\cap B^i_1|$; take such a sequence  for $1\leq i\leq t$. Then the concatenations of these sequences result in a sequence of at most $\min\{r_0,r_0-|A_1^0\cap B_1^0|+1\}+\sum_{i=1}^t [r_i-|A_1^i\cap B_1^i|]\leq \min\{r,r-|A_1\cap B_1|+1\}$ symmetric exchanges that transforms $(A_1,A_2)$ into $(B_1,B_2)$. 

Hence we may assume that $M$ is connected. Let $\cH$ be a hypergraph corresponding to a non-redundant representation of $M$; such a representation exists by Proposition~\ref{prop:esplit}. By convention, we denote the bound on a hyperedge $H_i$ by $r_i$. Choose a pair of inclusionwise maximal sequences $\ul{x}\coloneqq(x_1,\dots,x_s)$ and $\ul{y}\coloneqq(y_1,\dots,y_s)$ of distinct elements greedily that correspond to a strictly monotone sequence of exchanges, that is,
\begin{align*}
  \tag*{($\star$)}\label{star}
  \renewcommand{\arraystretch}{1.2}
  \begin{array}{c}
x_i\in A_1\cap B_2,\ y_i\in A_2\cap B_1,\\ 
(A_1-\{x_1,\dots,x_i\})\cup\{y_1,\dots,y_i\}\ \text{is a basis for $i=1,\dots,s$,}\\
(A_2-\{y_1,\dots,y_i\})\cup\{x_1,\dots,x_i\}\ \text{is a basis for $i=1,\dots,s$.}
  \end{array}
\end{align*}
Define $A_1'\coloneqq (A_1-\{x_1,\dots,x_s\})\cup\{y_1,\dots,y_s\}$ and $A_2'\coloneqq (A_2-\{y_1,\dots,y_s\})\cup \{x_1,\dots,x_s\}$. If $A_1'=B_1$, then these sequences correspond to symmetric exchanges transforming $(A_1,A_2)$ into $(B_1,B_2)$, and we are done. Otherwise $A_1'\neq B_1$ and, by the maximal choice of the sequences, there is no $x\in A_1'\cap B_2$ and $y\in A_2'\cap B_1$ such that both $A_1'-x+y$ and $A_2'+x-y$ are bases.

\begin{cl}\label{cl:h}
There exist distinct hyperedges $H_1$, $H_2$, $H_3$ and $H_4$ such that $A'_1$ is $H_1$- and $H_3$-tight, while $A'_2$ is $H_2$- and $H_4$-tight.
\end{cl}
\begin{claimproof}
Fix an arbitrary $x\in A_1'\cap B_2$ and let $y\in A_2'\cap B_1$ be an element for which $A_2'+x-y$ is a basis. Note that such a $y$ exists by the co-exchange axiom for $A_2'$, $B_2$ and $x\in B_2-A_2'$. By our assumption, $A_1'-x+y$ is not a basis, hence there exists a hyperedge $H_1$ with $|(A_1'-x+y)\cap H_1|>r_1$. As $A_1'$ is a basis, we get $|A_1'\cap H_1|=r_1$, $x\notin H_1$ and $y\in H_1$. 

By applying the co-exchange axiom for $A_1'$, $B_1$ and $y\in B_1-A_1'$, there exists an $x'\in A_1'\cap B_2$ for which $A_1'-x'+y$ is a basis. By our assumption, $A_2'+x'-y$ is not a basis, hence there exists a hyperedge $H_2$ with $|(A_2'+x'-y)\cap H_2|>r_2$. As $A_2'$ is a basis, we get $|A_2'\cap H_2|=r_2$, $y\notin H_2$ and $x'\in H_2$. Necessarily, $x'\in H_1$ as otherwise $|(A_1'-x'+y)\cap H_1|=r_1+1$, contradicting $A_1'-x'+y$ being a basis. Similarly, $x\notin H_2$ as otherwise $|(A_2'+x-y)\cap H_2|=r_2+1$, contradicting $A_2'+x-y$ being a basis.

By applying the co-exchange axiom for $A_2'$, $B_2$ and $x'\in B_2-A_2'$, there exists a $y'\in A_2'\cap B_1$ for which $A_2'+x'-y'$ is a basis. By our assumption, $A_1'-x'+y'$ is not a basis, hence there exists a hyperedge $H_3$ with $|(A_1'-x'+y')\cap H_3|>r_3$. As $A_1'$ is a basis, we get $|A_1'\cap H_3|=r_3$, $x'\notin H_3$ and $y'\in H_3$. The set $A_1'$ is both $H_1$- and $H_3$-tight, hence $H_1\cap H_3\subseteq A_1'\subseteq H_1\cup H_3$ by Lemma~\ref{lem:tight}\ref{it:f}. As $x\in A_1'-H_1$, $y\in H_1-A_1'$ and $y'\in H_3-A_1'$, we get $x\in H_3$, $y\notin H_3$ and $y'\notin H_1$, respectively.

Being $H_1$-tight, the set $A_1'-x+y'$ is a basis. Therefore $A_2'+x-y'$ is not a basis, implying the existence of a hyperedge $H_4$ with $|(A_2'+x-y')\cap H_4|>r_4$. As $A_2'$ is a basis, we get $|A_2'\cap H_4|=r_4$, $y'\notin H_4$ and $x\in H_4$. The set $A_2'$ is both $H_2$- and $H_4$-tight, hence $H_2\cap H_4\subseteq A_2'\subseteq H_2\cup H_4$ by Lemma~\ref{lem:tight}\ref{it:f}. As $y\in A_2'-H_2$, $x'\in H_2-A_2'$ and $y'\in A_2'-H_4$, we get $y\in H_4$, $x'\notin H_4$ and $y'\in H_2$.

Note that the pairs $\{x',y\}$, $\{x',y'\}$, $\{x,y'\}$ and $\{x,y\}$ are included only in $H_1$, $H_2$, $H_3$ and $H_4$, respectively, hence these hyperedges are pairwise distinct.
\end{claimproof}

Our goal is to recover the relations of the four hyperedges identified by Claim~\ref{cl:h}. For ease of discussion, we denote by $H_{i,j}\coloneqq(H_i\cap H_j)-(H_k\cup H_\ell)$ for $\{i,j,k,\ell\}=\{1,2,3,4\}$. The proof of the claim also implies that none of the sets $A'_1\cap B_2\cap H_{3,4}$, $A'_1\cap B_2\cap H_{1,2}$, $A'_2\cap B_1\cap H_{1,4}$ and $A'_2\cap B_1\cap H_{2,3}$ is empty, see the elements $x$, $x'$, $y$ and $y'$.

\begin{cl}\label{cl:symdiff}
$A'_1\cap B_2\subseteq H_{1,2}\cup H_{3,4}$ and 
$A'_2\cap B_1\subseteq H_{1,4}\cup H_{2,3}$.
\end{cl}
\begin{claimproof}
Take an arbitrary $z\in A'_1\cap B_2$. As $z\in A'_1-A'_2$, Claim~\ref{cl:h} and Lemma~\ref{lem:tight}\ref{it:f} together imply $z\in H_1\cup H_3$ and $z\notin H_2\cap H_4$. Furthermore, $z\notin H_1-H_2$ as otherwise $A'_1-z+y$ is $H_1$-tight and $A'_2+z-y$ is $H_2$-tight for any $y\in A'_2\cap B_1\cap H_{1,4}$, contradicting the assumption that no such symmetric exchange exists. Similarly, $z\notin H_3-H_4$ as otherwise $A'_1-z+y'$ is $H_3$-tight and $A'_2-y'+z$ is $H_4$-tight for any $y'\in A'_2\cap B_1\cap H_{2,3}$, contradicting the assumption that no such symmetric exchange exists. These observations imply $A'_1\cap B_2\subseteq H_{1,2}\cup H_{3,4}$.

The inclusion $A'_2\cap B_1\subseteq H_{1,4}\cup H_{2,3}$ can be proved analogously. 
\end{claimproof}

Next we show that either the length of the strictly monotone sequence of exchanges can be increased, or we derive further structural observations on the hyperedges $H_1$, $H_2$, $H_3$ and $H_4$.

\begin{cl}\label{cl:schwarcz}
Either there exists a pair of sequences $\ul{x}'=(x'_1,\dots,x'_{s+2})$ and $\ul{y}'=(y'_1,\dots,y'_{s+2})$ satisfying \ref{star}, or $\{x_i,y_i\}\subseteq (H_1\triangle H_3)\cap (H_2\triangle H_4)$ and $x_i$ and $y_i$ are contained in the same hyperedges from $\{H_1,H_2,H_3,H_4\}$ for each $1 \le i \le s$. 
\end{cl}
\begin{claimproof}
We prove the claim by considering the pairs $x_i,y_i$ starting from $i=s$ in a reversed order. In a general phase, consider an index $1\leq i\leq s$ and assume that $x_j$ and $y_j$ are contained in the same hyperedges from $\{H_1, H_2, H_3, H_4\}$ for each $i+1 \le j \le s$. This assumption and Claim~\ref{cl:h} imply that 
\begin{gather*}
    (A_1-\{x_1,\dots,x_j\})+\{y_1,\dots,y_j\}\ \text{is $H_1$- and $H_3$-tight for $i+1\leq j\leq s$},\\ 
    (A_2-\{y_1,\dots,y_j\})+\{x_1,\dots,x_j\}\ \text{is $H_2$- and $H_4$-tight for $i+1\leq j\leq s$}.
\end{gather*} 
By Claim~\ref{cl:h} and Lemma~\ref{lem:tight}\ref{it:f}, we have $x_i \not \in H_1 \cap H_3$, $x_i \in H_2 \cup H_4$, $y_i \in H_1 \cup H_3$ and $y_i \not \in H_2\cap H_4$. We prove that either we can construct a pair $\ul{x}',\ul{y}'$ of sequences of length $s+2$ satisfying the conditions of \ref{star}, or $\{x_i, y_i\}\subseteq (H_1 \triangle H_3) \cap (H_2 \triangle H_4)$ and $x_i$ and $y_i$ are contained in the same hyperedges from $\{H_1,H_2,H_3,H_4\}$. Fix elements $x\in A'_1\cap B_2\cap H_{3,4}$, $x'\in A'_1\cap B_2\cap H_{1,2}$, $y\in A'_2\cap B_1\cap H_{1,4}$ and $y'\in A'_2\cap B_1\cap H_{2,3}$. We discuss four cases.
\smallskip

\noindent\textbf{Case 1.} $x_i \not \in H_1 \cup H_3$.

\noindent If $x_i \in H_2$, define $\ul{x}'\coloneqq(x_1,\dots,x_{i-1},x',x_{i+1},\dots,x_s,x,x_i)$ and $\ul{y}'\coloneqq(y_1,\dots,y_s,y,y')$. Then 
\begin{gather*}
    (A_1-\{x'_1,\dots,x'_j\})+\{y'_1,\dots,y'_j\}\ \text{is $H_3$-tight for $i \le j \le s$ and $H_1$-tight for $j=s+1,s+2$},\\
    (A_2-\{y'_1,\dots,y'_j\})+\{x'_1,\dots,x'_j\}\ \text{is $H_2$-tight for $i \le j \le s+2$}.
\end{gather*}  
If $x_i\notin H_2$, then $x_i\in H_4$. Consider the sequences $\ul{x}'\coloneqq(x_1,\dots,x_{i-1},x,x_{i+1},\dots,x_s,x',x_i)$ and $\ul{y}'\coloneqq(y_1,\dots,y_s,y',y)$. Then 
\begin{gather*}
    (A_1-\{x'_1,\dots,x'_j\})+\{y'_1,\dots,y'_j\}\ \text{is $H_1$-tight for $i \le j \le s$ and $H_3$ tight for $j=s+1,s+2$},\\
    (A_2-\{y'_1,\dots,y'_j\})+\{x'_1,\dots,x'_j\}\ \text{is $H_2$-tight for $i \le j \le s+2$}.
\end{gather*}  

\noindent\textbf{Case 2.} $x_i \in H_2 \cap H_4$.

\noindent If $x_i \notin H_1$, define $\ul{x}'\coloneqq (x_1,\dots,x_{i-1},x,x_{i+1},\dots,x_s,x',x_i)$ and $\ul{y}'\coloneqq(y_1,\dots,y_s,y,y')$. Then 
\begin{gather*}
    (A_1-\{x'_1,\dots,x'_j\})+\{y'_1,\dots,y'_j\}\ \text{is $H_1$-tight for $i \le j \le s+2$},\\
    (A_2-\{y'_1,\dots,y'_j\})+\{x'_1,\dots,x'_j\}\ \text{is $H_4$-tight for $i \le j \le s+1$ and $H_2$-tight for $j=s+1,s+2$}.\\
\end{gather*}  
If $x_i\in H_1$, then $x_i\notin H_3$. Consider the sequences $\ul{x}'\coloneqq(x_1,\dots,x_{i-1},x',x_{i+1},\dots,x_s,x,x_i)$ and $\ul{y}'\coloneqq(y_1,\dots,y_s,y',y)$. Then
\begin{gather*}
    (A_1-\{x'_1,\dots,x'_j\})+\{y'_1,\dots,y'_j\}\ \text{is $H_1$-tight for $i \le j \le s+2$},\\
    (A_2-\{y'_1,\dots,y'_j\})+\{x'_1,\dots,x'_j\}\ \text{is $H_2$-tight for $i \le j \le s$ and $H_4$-tight for $j=s+1,s+2$}.
\end{gather*}

\noindent\textbf{Case 3.} $y_i \in H_1 \cap H_3$.

\noindent If $y_i \notin H_2$, define $\ul{x}'\coloneqq(x_1,\dots,x_s,x',x)$ and $\ul{y}'\coloneqq(y_1,\dots,y_{i-1},y,y_{i+1},\dots,y_s,y',y_i)$. Then 
\begin{gather*}
    (A_1-\{x'_1,\dots,x'_j\})+\{y'_1,\dots,y'_j\}\ \text{is $H_1$-tight for $i \le j \le s$ and $H_3$-tight for $j=s+1,s+2$},\\
    (A_2-\{y'_1,\dots,y'_j\})+\{x'_1,\dots,x'_j\}\ \text{is $H_2$-tight for $i \le j \le s+2$}.
\end{gather*}  
If $y_i\in H_2$, then $y_i\notin H_4$. Consider the sequences $\ul{x}'\coloneqq(x_1,\dots,x_s,x,x')$ and $\ul{y}'\coloneqq(y_1,\dots,y_{i-1},\allowbreak y',y_{i+1},\dots,y_s,y,y_i)$. Then 
\begin{gather*}
    (A_1-\{x'_1,\dots,x'_j\})+\{y'_1,\dots,y'_j\}\ \text{is $H_3$-tight for $i \le j \le s$ and $H_1$-tight for $j=s+1,s+2$},\\
    (A_2-\{y'_1,\dots,y'_j\})+\{x'_1,\dots,x'_j\}\ \text{is $H_4$-tight for $i \le j \le s$}.
\end{gather*}  

\noindent\textbf{Case 4.} $y_i \notin H_2 \cup H_4$.

\noindent If $y_i \in H_1$, define $\ul{x}'\coloneqq(x_1,\dots,x_s,x,x')$ and $\ul{y}'\coloneqq(y_1,\dots,y_{i-1},y,y_{i+1},\dots,y_s,y',y_i)$. Then 
\begin{gather*}
    (A_1-\{x'_1,\dots,x'_j\})+\{y'_1,\dots,y'_j\}\ \text{is $H_1$-tight for $i \le j \le s+2$},\\
    (A_2-\{y'_1,\dots,y'_j\})+\{x'_1,\dots,x'_j\}\ \text{is $H_2$-tight for $i \le j \le s$ and $H_4$-tight for $j=s+1,s+2$}.
\end{gather*}  
If $y_i\notin H_1$, then $y_i\in H_3$. Consider the sequences $\ul{x}'\coloneqq(x_1,\dots,x_s,x',x)$ and $\ul{y}'\coloneqq(y_1,\dots,y_{i-1},\allowbreak y',y_{i+1},\dots,y_s,y,y_i)$. Then 
\begin{gather*}
    (A_1-\{x'_1,\dots,x'_j\})+\{y'_1,\dots,y'_j\}\ \text{is $H_3$-tight for $i \le j \le s+2$},\\
    (A_2-\{y'_1,\dots,y'_j\})+\{x'_1,\dots,x'_j\}\ \text{is $H_4$-tight for $i \le j \le s$ and $H_2$-tight for $j=s+1,s+2$}.
\end{gather*}  

By the above, every basis that is affected by the changes in $\ul{x}$ and $\ul{y}$ remains tight, and hence a basis. In addition, $x'_{s+1},y'_{s+1}$ and $x'_{s+2},y'_{s+2}$ are also symmetric exchanges for the corresponding bases, therefore $\ul{x}'$ and $\ul{y}'$ satisfy the conditions of \ref{star}.

If none of Cases 1-4 applies, then $\{x_i,y_i\}\subseteq (H_1\triangle H_3)\cap (H_2\triangle H_4)$. Therefore there exist unique indices $k, k' \in \{1,3\}$ and $\ell, \ell' \in \{2,4\}$ such that $x_i \in H_k \cap H_\ell$ and $y_i \in H_{k'} \cap H_{\ell'}$. As $(A_1-\{x_1,\dots,x_i\})\cup\{y_1,\dots,y_i\}$ is $H_k$-tight and $(A_1-\{x_1,\dots,x_{i-1}\})\cup\{y_1,\dots,y_{i-1}\}$ is a basis, it follows that $y_i \in H_k$, that is, $k=k'$. As $(A_2-\{y_1,\dots,y_i\})\cup\{x_1,\dots,x_i\}$ is $H_{\ell'}$-tight and $(A_2-\{y_1,\dots,y_{i-1}\})\cup\{x_1,\dots,x_{i-1}\}$ is a basis, it follows that $x_i \in H_{\ell '}$, that is, $\ell = \ell'$. This concludes the proof of the claim. 
\end{claimproof}

If possible, we increase the length of $\ul{x}$ and $\ul{y}$ using Claim~\ref{cl:schwarcz}, and start again the whole procedure. Hence assume that this is not the case, that is, $A'_1\neq B_1$ and $\{x_i,y_i\}\subseteq (H_1\triangle H_3)\cap (H_2\triangle H_4)$ for each $1 \le i \le s$. Let us denote by $d\coloneqq |A'_1-B_1|=|A'_2-B_2|$. 

\begin{cl}\label{cl:tight}
$|(A'_k-B_k)\cap H_i|=d/2$ for $k=1,2$ and $1\leq i\leq 4$. Furthermore, $A_1$ and $B_1$ are $H_1$- and $H_3$-tight, while $A_2$ and $B_2$ are $H_2$- and $H_4$-tight.
\end{cl}
\begin{claimproof} 
We know that $A'_1$ is $H_1$- and $H_3$-tight, and $A'_2$-is $H_2$- and $H_4$-tight. As $x_i$ and $y_i$ are contained in the same hyperedges from $\{H_1, H_2, H_3, H_4\}$ for each $1\le i \le s$ by Claim~\ref{cl:schwarcz}, these immediately imply that $A_1$ is $H_1$- and $H_3$-tight, and $A_2$ is $H_2$- and $H_4$-tight. 
As $A'_1$ is $H_k$-tight for $k\in\{1,3\}$ and $B_1$ is a basis, we get that \[|A'_1 \cap H_k| = r_k \ge |B_1 \cap H_k| = |A'_1 \cap H_k| - |A'_1 \cap B_2 \cap H_k| + |A'_2 \cap B_1 \cap H_k|,\]
hence 
\[|A'_1 \cap B_2 \cap H_k| \ge |A'_2 \cap B_1 \cap H_k| \text{ for $k\in \{1,3\}$}.\]
Similarly, as $B_2$ is a basis and $A'_2$ is $H_k$-tight for $k\in\{2,4\}$, we have
\[|A'_2 \cap B_1 \cap H_k| \ge |A'_1 \cap B_2 \cap H_k| \text{ for $k\in \{2,4\}$}.\]
Using these inequalities and Claim~\ref{cl:symdiff}, it follows that
\begin{align*}
    |A'_1\cap B_2\cap H_1|
    {}&{}\geq |A'_2\cap B_1\cap H_1|=|A'_2\cap B_1\cap H_4|\\
    {}&{}\geq |A'_1\cap B_2\cap H_4|=|A'_1\cap B_2\cap H_3|\\
    {}&{}\geq |A'_2\cap B_1\cap H_3|=|A'_2\cap B_1\cap H_2|\\
    {}&{}\geq |A'_1\cap B_2\cap H_2|=|A'_1\cap B_2\cap H_1|.
\end{align*}
Therefore equality holds throughout. This implies that the common size of these sets is $d/2$, and that $B_1$ is $H_1$- and $H_3$-tight, while $B_2$ is $H_2$- and $H_4$-tight.
\end{claimproof}

Note that $d$ can be interpreted as the number of elements that could not be included in the strictly monotone sequence of exchanges. The next claim shows that $d$ is not too large compared to $|A_1\cap B_1|$.

\begin{cl}\label{cl:d}
$d\leq 2|A_1\cap B_1|$.
\end{cl}
\begin{claimproof}
By combining Claims~\ref{cl:symdiff} and \ref{cl:tight}, we get that $|A'_1 \cap B_2 \cap H_{1,2}| = |A'_1 \cap B_2 \cap H_{3,4}| = d/2$, thus $|(A_1 \cap B_2) - (H_1 \cup H_2)| \ge |(A'_1 \cap B_2) - (H_1 \cup H_2) | = d/2$. Using that $A_1$ is $H_1$-tight and $B_2$ is $H_2$-tight by Claim~\ref{cl:tight}, we get
\begin{align*}
    |H_1\cap H_2|
    {}&{}\leq
    r_1+r_2-r\\
    {}&{} = |A_1\cap H_1|+|B_2\cap H_2|-r\\
    {}&{} = |A_1\cap B_1\cap H_1|+|A_1\cap B_2\cap H_1|+|A_1\cap B_2\cap H_2|+|A_2\cap B_2\cap H_2|-r\\
    {}&{} \le |A_1 \cap B_1| + |A_1 \cap B_2 \cap H_1| + |A_1 \cap B_2 \cap H_2| + |A_2 \cap B_2| -r \\ 
    {}&{} = |A_1 \cap B_1| + |A_1 \cap B_2 \cap H_1| + |A_1 \cap B_2 \cap H_2| - |A_1 \cap B_2| \\ 
    {}&{} = |A_1\cap B_1|+|(A_1 \cap B_2) \cap (H_1 \cap H_2)| - |(A_1\cap B_2)-(H_1\cup H_2)|\\
    {}&{}\leq
    |A_1\cap B_1|+|H_1\cap H_2|-d/2 
\end{align*}
concluding the proof.
\end{claimproof}

As we assumed that $d>0$ holds, the claim implies $A_1\cap B_1\neq \emptyset$, meaning that $\min\{r,r-|A_1\cap B_1|+1\}=r-|A_1\cap B_1|+1$. Hence it suffices to show that the exchange distance of $(A'_1,A'_2)$ and $(B_1,B_2)$ is at most $d+1$. We will directly construct the remaining exchanges by relying on the existence of an element in $A'_1\triangle B_2=A'_2\triangle B_1$ that is contained in exactly one or three of the hyperedges $H_1$, $H_2$, $H_3$ and $H_4$.

\begin{cl}\label{cl:z}
$(A'_1\cap B_1)\cup(A'_2\cap B_2)\not\subseteq H_1\triangle H_3$ and $(A'_1\cap B_1)\cup(A'_2\cap B_2)\not\subseteq H_2\triangle H_4$.
\end{cl}
\begin{claimproof} 
We prove $(A'_1 \cap B_1) \cup (A'_2 \cap B_2) \not\subseteq H_2 \triangle H_4$, the inclusion $(A'_1 \cap B_1) \cup (A'_2 \cap B_2) \not\subseteq H_1 \triangle H_3$ can be proved analogously. Suppose indirectly that $(A'_1\cap B_1)\cup(A'_2\cap B_2)\subseteq H_2\triangle H_4$. Then, by Claim~\ref{cl:symdiff}, we get
\begin{align*}
r=|A'_1|=|A'_1\cap H_2|+|A'_1\cap H_4|
\leq r_2+r_4=|A'_2\cap H_2|+|A'_2\cap H_4|
= |A'_2|=r.
\end{align*}
That is, $A'_1$ is $H_2$- and $H_4$-tight. However, $A'_1$ is also $H_1$-tight but, for example, $x\in A'_1-(H_1\cup H_2)$, contradicting Lemma~\ref{lem:tight}\ref{it:f}. 
\end{claimproof}

Recall that $A'_1$ and $B_1$ are $H_1$- and $H_3$-tight, while $A'_2$ and $B_2$ are $H_2$- and $H_4$-tight. Hence, by Lemma~\ref{lem:tight}\ref{it:f}, $A'_1\cap B_1\subseteq (H_1\cup H_3)-(H_2\cap H_4)$ and $A'_2\cap B_2\subseteq (H_2\cup H_4)-(H_1\cap H_3)$. By Claim~\ref{cl:z} and by symmetries on the roles of $A'_1\cap B_1$ and $A'_2\cap B_2$, of $H_1$ and $H_3$, and of $H_2$ and $H_4$, we may assume that there exists $z\in A'_1\cap B_1$ for which $z\in H_1-(H_2\cup H_3\cup H_4)$ or $z\in (H_1\cap H_3)-H_2$. Let 
\begin{gather*}
A'_1\cap B_2\cap H_{1,2}=\{e_1,\dots,e_{d/2}\},\quad A'_2\cap B_1\cap H_{1,4}=\{f_1,\dots,f_{d/2}\},\\ 
A'_1\cap B_2\cap H_{3,4}=\{g_1,\dots,g_{d/2}\},\quad A'_2\cap B_1\cap H_{2,3}=\{h_1,\dots,h_{d/2}\}.
\end{gather*}
Assume first that $z\in H_1-(H_2\cup H_3\cup H_4)$. Define 
\begin{align*}
    &A^1_1\coloneqq A'_1-z+f_1, &&A^1_2\coloneqq A'_2+z-f_1, &&\\
    &A^{2i}_1\coloneqq A^{2i-1}_1-g_i+h_i, &&A^{2i}_2\coloneqq A^{2i-1}_2+g_i-h_i && \text{for $i=1,\dots,d/2$,}\\
    &A^{2i+1}_1\coloneqq A^{2i}_1-e_i+f_{i+1}, &&A^{2i+1}_2\coloneqq A^{2i}_2+e_i-f_{i+1} && \text{for $i=1,\dots,d/2-1$,}\\
    &A^{d+1}_1\coloneqq A^{d}_1-e_{d/2}+z=B_1, &&A^{d+1}_2\coloneqq A'_2+e_{d/2}-z=B_2. &&
\end{align*}
As $z\in H_1-(H_2\cup H_3\cup H_4)$, we get that $A^i_1$ is $H_1$-tight for $i=1,\dots,d+1$, $A^{2i+1}_2$ is $H_2$-tight for $i=0,\dots,d/2$, and $A^{2i}_2$ is $H_4$-tight for $i=1,\dots,d/2$. 

Now assume that $z\in (H_1\cap H_3)-H_2$. Define
\begin{align*}
    &A^1_1\coloneqq A'_1-z+f_1, &&A^1_2\coloneqq A'_2+z-f_1, &&\\
    &A^{2i}_1\coloneqq A^{2i-1}_1-e_i+h_i, &&A^{2i}_2\coloneqq A^{2i-1}_2+e_i-h_i && \text{for $i=1,\dots,d/2$,}\\
    &A^{2i+1}_1\coloneqq A^{2i}_1-g_i+f_{i+1}, &&A^{2i+1}_2\coloneqq A^{2i}_2+g_i-f_{i+1} && \text{for $i=1,\dots,d/2-1$,}\\
    &A^{d+1}_1\coloneqq A^{d}_1-g_{d/2}+z=B_1, &&A^{d+1}_2\coloneqq A'_2+g_{d/2}-z=B_2. &&
\end{align*}
As $z\in (H_1\cap H_3)-H_2$, we get that $A^{2i+1}_1$ is $H_1$-tight for $i=0,\dots,d/2$, $A^{2i}_1$ is $H_3$-tight for $i=1,\dots,d/2$, and $A^i_2$ is $H_2$-tight for $i=1,\dots,d+1$.

Thus, in both cases, we defined a sequence of $d+1$ symmetric exchanges that transforms $(A'_1,A'_2)$ into $(B_1,B_2)$. This concludes the proof of the theorem.
\end{proof}

Assuming an independence oracle access to the matroid, the proof of Theorem~\ref{thm:main} immediately implies a polynomial-time algorithm that determines a sequence of symmetric exchanges transforming $(A_1,A_2)$ into $(B_1,B_2)$. Indeed, the sequences $\ul{x}$ and $\ul{y}$ are built up greedily. In Claim~\ref{cl:schwarcz}, one can check if the sequences can be extended by considering all possible pairs $x,x'\in A'_1\cap B_2$ and $y,y'\in A'_2\cap B_1$, and trying all the modifications discussed in Cases 1-4 of the proof of the claim. When $\ul{x}$ and $\ul{y}$ cannot be extended, then one can determine sets $X$, $Y$, $Z$ and $W$ such that $\{X,Y\}=\{A'_1\cap B_2\cap H_{1,2},A'_1\cap B_2\cap H_{3,4}\}$ and $\{Z,W\}=\{A'_2\cap B_1\cap H_{1,4},A'_2\cap B_1\cap H_{2,3}\}$ using single exchanges and the $H_1$- and $H_3$-tightness of $A'_1$ and the $H_2$- and $H_4$-tightness. Finally, checking for all $z\in (A'_1\cap B_1)\cup (A'_2\cap B_2)$ if the exchanges described at the end of the proof are feasible or not results in a desired sequence.

Now we show how to verify Conjectures~\ref{conj:gabow}-\ref{conj:equitable} for split matroids using Theorem~\ref{thm:main}. The proofs are immediate from the statement of the theorem. 

\begin{cor}\label{cor:gabow}
Conjecture~\ref{conj:gabow} holds for split matroids.
\end{cor}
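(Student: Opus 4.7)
The plan is to derive the corollary from Theorem~\ref{thm:main} after contracting the common part of the two bases. Let $C \coloneqq A \cap B$ and set $M' \coloneqq M/C$, which is a split matroid of rank $r' \coloneqq r-|C|$ by Proposition~\ref{prop:conn}\ref{it:min}. The sets $A' \coloneqq A-C$ and $B' \coloneqq B-C$ are disjoint bases of $M'$, and the basis pairs $(A',B')$ and $(B',A')$ are trivially compatible, so Theorem~\ref{thm:main} yields a sequence of at most $\min\{r',\,r'+1\}=r'$ symmetric exchanges transforming $(A',B')$ into $(B',A')$.

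The central point is that in this disjoint-basis regime the bound is automatically tight and, moreover, forces strict monotonicity. Throughout any such exchange sequence the identities $A^{(i)}\cap B^{(i)}=\emptyset$ and $A^{(i)}\cup B^{(i)}=A'\cup B'$ persist, so an exchange of $e\in A^{(i)}$ with $f\in B^{(i)}$ changes $|A^{(i)}-B'|$ by $-1$ precisely when $e\in A'$ and $f\in B'$, and by $0$ or $+1$ in the remaining cases. Since the total change along a length-$r'$ sequence must equal $-r'$, every single step must realise the $-1$ case. This yields a bijection $\varphi\colon A'\to B'$ together with an ordering $a_1,\dots,a_{r'}$ of $A'$ along which successively swapping $a_i$ with $\varphi(a_i)$ passes through bases of $M'$ at every intermediate stage.

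To finish, I would extend the orderings to $A$ and $B$ by appending any common ordering $c_1,\dots,c_{|C|}$ of $C$ to both sides, producing $A=(a_1,\dots,a_{r'},c_1,\dots,c_{|C|})$ and $B=(\varphi(a_1),\dots,\varphi(a_{r'}),c_1,\dots,c_{|C|})$. For $i\le r'$ the hybrid set $\{a_1,\dots,a_i,b_{i+1},\dots,b_r\}$ contains all of $C$, and stripping $C$ off recovers one of the intermediate bases produced above, so the hybrid is a basis of $M$; the symmetric argument handles the other hybrid, and for $i>r'$ both hybrids coincide with $A$ or $B$ themselves. The only delicate point is the monotonicity argument in the second paragraph, which succeeds because the reduction to disjoint bases sharpens Theorem~\ref{thm:main}'s bound from $r-|C|+1$ down to $r'$.
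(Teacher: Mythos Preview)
Your argument is correct and follows essentially the same route as the paper: apply Theorem~\ref{thm:main} to the pair $(A,B)$ versus $(B,A)$, observe that the resulting exchange sequence hits the trivial lower bound and is therefore strictly monotone, and read off the orderings. Your proof is in fact more careful than the paper's on two points: you explicitly contract $C=A\cap B$ before invoking Theorem~\ref{thm:main} (the paper writes ``$|A-B|=|A|=r$'', which tacitly assumes $A\cap B=\emptyset$), and you spell out the counting argument forcing every step to be of the $-1$ type, which the paper leaves implicit in the sentence ``By defining the pairs $a_i,b_i$ to be the symmetric exchanges of this sequence, the theorem follows.''
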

\begin{proof}
Consider the ordered basis pairs $(A,B)$ and $(B,A)$; these pairs are clearly compatible. Note that $|A-B|=|A|=r$ is a lower bound for the minimum length of a sequence of symmetric exchanges that transforms $(A,B)$ into $(B,A)$. By Theorem~\ref{thm:main}, there exists such a sequence with length exactly $\min\{r,|A-B|+1\}=r$. By defining the pairs $a_i,b_i$ to be the symmetric exchanges of this sequence, the theorem follows.  
\end{proof} 
 
\begin{cor}\label{cor:white}
Conjecture~\ref{conj:white} holds for basis sequences of length two in split matroids.
\end{cor}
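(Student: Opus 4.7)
The statement to prove is that for a split matroid $M$, any two compatible basis pairs $(A_1,A_2)$ and $(B_1,B_2)$ are equivalent, i.e., one can be transformed into the other by a sequence of symmetric exchanges (the converse implication is immediate from the definitions, since a symmetric exchange preserves the multiset of ground set elements appearing across the sequence).

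My plan is to deduce this directly from Theorem~\ref{thm:main}. Recall that equivalence of basis pairs in the sense of Conjecture~\ref{conj:white} is exactly the condition that the exchange distance is finite. Theorem~\ref{thm:main} establishes the much stronger quantitative bound that the exchange distance of compatible basis pairs in a rank-$r$ split matroid is at most $\min\{r, r-|A_1\cap B_1|+1\}$, which is in particular finite. Thus compatibility already implies equivalence.

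The only thing worth a sentence of care is that Theorem~\ref{thm:main} is stated for \emph{compatible} basis pairs in the sense that $A_1\cap A_2 = B_1\cap B_2$ and $A_1\cup A_2=B_1\cup B_2$, which is precisely the specialization of White's compatibility condition to sequences of length two: the condition $|\{i : s\in X_i\}| = |\{i : s\in Y_i\}|$ for every $s\in S$ in the length-two case $\mathcal{X}=(A_1,A_2)$, $\mathcal{Y}=(B_1,B_2)$ asserts exactly that each element lies in the same number (zero, one, or two) of the two bases on each side, which is the same as the two-set conditions above. So no further work is required: Theorem~\ref{thm:main} applies verbatim and yields the corollary.

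There is no real obstacle here; the corollary is a direct restatement of the finiteness part of Theorem~\ref{thm:main}. The entire proof amounts to verifying that the compatibility notions match up and then citing the theorem.
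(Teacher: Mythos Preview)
Your proposal is correct and matches the paper's own proof: both simply observe that Theorem~\ref{thm:main} gives a finite exchange distance for compatible basis pairs, which is exactly equivalence in the sense of Conjecture~\ref{conj:white}. Your additional sentence unpacking why White's compatibility condition for length-two sequences coincides with the condition $A_1\cap A_2=B_1\cap B_2$ and $A_1\cup A_2=B_1\cup B_2$ is a helpful clarification that the paper leaves implicit.
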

\begin{proof}
Theorem~\ref{thm:main} implies that if $(A_1,A_2)$ and $(B_1,B_2)$ are compatible basis sequences, then the latter can be obtained from the former by a sequence of symmetric exchanges, thus the corollary follows.
\end{proof} 

As both Corollary~\ref{cor:gabow} and Corollary~\ref{cor:white} implies the equitability of split matroids, we also get the following. 

\begin{cor}\label{cor:equitable}
Conjecture~\ref{conj:equitable} holds for split matroids.
\end{cor}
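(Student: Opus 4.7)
The plan is to derive the equitability of split matroids as an immediate consequence of Corollary~\ref{cor:gabow} via a discrete intermediate-value argument. Let $M=(S,\cB)$ be a split matroid whose ground set is partitioned into two bases $A$ and $B$, and fix an arbitrary $X\subseteq S$. The goal is to produce a basis $B'\in\cB$ with $S-B'\in\cB$ and $\lfloor|X|/2\rfloor\leq|B'\cap X|\leq\lceil|X|/2\rceil$.

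First I would invoke Corollary~\ref{cor:gabow} to obtain orderings $A=(a_1,\dots,a_r)$ and $B=(b_1,\dots,b_r)$ such that $A^i\coloneqq\{a_1,\dots,a_i,b_{i+1},\dots,b_r\}$ and $B^i\coloneqq\{b_1,\dots,b_i,a_{i+1},\dots,a_r\}$ are bases for every $i=0,\dots,r$. By construction $S-A^i=B^i$, so each $A^i$ is automatically a candidate for the desired basis $B'$, and the extremal cases are $A^0=B$ and $A^r=A$.

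Next I would analyze the integer-valued function $f(i)\coloneqq|A^i\cap X|$ for $i=0,\dots,r$. Since $A^i=A^{i-1}-b_i+a_i$, consecutive values of $f$ differ by at most $1$. Moreover, as $A$ and $B$ partition $S$, one has $f(0)+f(r)=|B\cap X|+|A\cap X|=|X|$, so one of $f(0),f(r)$ is at most $\lfloor|X|/2\rfloor$ and the other is at least $\lceil|X|/2\rceil$. By the discrete intermediate-value property of a sequence with step size bounded by $1$, there exists an index $i^*$ with $\lfloor|X|/2\rfloor\leq f(i^*)\leq\lceil|X|/2\rceil$, and then $B'\coloneqq A^{i^*}$ satisfies the required bounds with complementary basis $S-B'=B^{i^*}$.

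Since the reduction is elementary, there is no real obstacle once Corollary~\ref{cor:gabow} is in hand; the entire argument amounts to the observation that the Gabow ordering realizes a path in the basis-complement pairs along which $|\,\cdot\cap X|$ changes by at most one per step, forcing it to hit the equitable range.
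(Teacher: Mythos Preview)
Your argument is correct and follows the same route the paper takes: the paper's proof of Corollary~\ref{cor:equitable} is simply the one-line remark that it is implied by Corollary~\ref{cor:gabow} (or Corollary~\ref{cor:white}), without spelling out the implication. What you have written is exactly the standard discrete intermediate-value derivation of equitability from the sequential symmetric exchange property, i.e., the details the paper leaves implicit.
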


\section{Maximum length strictly monotone exchange sequences}
\label{sec:monotone}

The essence of Theorem~\ref{thm:main} is that in split matroids we can transform a basis pair $(A_1,A_2)$ into another $(B_1,B_2)$ by using at most one more symmetric exchanges than the obvious lower bound $r-|A_1\cap B_1|$. However, one might be interested in finding a longest strictly monotone exchange sequence for the basis pairs. It turns out that the sequence determined in the first half of the algorithm is longest possible.

\begin{thm} \label{thm:alg}
Let $(A_1, A_2)$ and $(B_1,B_2)$ be compatible basis pairs of a rank-$r$ split matroid $M$. Then a longest strictly monotone sequence of symmetric exchanges for $(A_1,A_2)$ and $(B_1,B_2)$ can be determined using a polynomial number of oracle calls.
\end{thm}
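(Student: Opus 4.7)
The plan is to prove correctness and polynomial runtime of the algorithm implicit in the proof of Theorem~\ref{thm:main}. After the preliminary reductions (contracting $A_1\cap A_2$, handling uniform components via Proposition~\ref{prop:conn}, and fixing a non-redundant hypergraph representation via Proposition~\ref{prop:esplit}), the algorithm alternates two phases: Phase~1 greedily extends a strictly monotone sequence $(\ul{x},\ul{y})$ satisfying \ref{star} by scanning all candidate pairs $x\in A_1'\cap B_2$, $y\in A_2'\cap B_1$ with the oracle; Phase~2 attempts to extend the sequence by~$2$ via the substitution rules of Claim~\ref{cl:schwarcz}, trying each index $i$ and each quadruple of replacement elements. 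When Phase~2 succeeds, the loop restarts; otherwise it terminates. Since each successful Phase~2 increases the sequence length by~$2$, the outer loop has at most $r/2$ iterations, giving a polynomial bound on oracle calls.

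\medskip

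Optimality of the output length $s$ is immediate when $s=|A_1-B_1|$, so suppose $s<|A_1-B_1|$. At termination, Claims~\ref{cl:h}, \ref{cl:symdiff}, and~\ref{cl:tight} yield four distinct hyperedges $H_1,H_2,H_3,H_4$ such that $A_1$ and $B_1$ are $H_1$- and $H_3$-tight, $A_2$ and $B_2$ are $H_2$- and $H_4$-tight, and every element of $A_1\cap B_2$ (respectively $A_2\cap B_1$) lies in exactly one of $\{H_1,H_3\}$ and exactly one of $\{H_2,H_4\}$. Classify each such element by its two memberships into four \emph{types} $(1,2), (1,4), (3,2), (3,4)$; the trapped elements occupy types $(1,2),(3,4)$ in $A_1\cap B_2$ and $(1,4),(3,2)$ in $A_2\cap B_1$ with $d/2$ representatives each, and every algorithm-used pair $(x_i,y_i)$ has matching type by Claim~\ref{cl:schwarcz}.

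\medskip

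The key step is to show that along any strictly monotone sequence from $(A_1,A_2)$, every symmetric exchange $x\to y$ pairs elements of the same type, and that such within-type exchanges preserve the $H_1$- and $H_3$-tightness of the first basis and the $H_2$- and $H_4$-tightness of the second. The first part follows from the forced inequalities $[y\in H_k]\le[x\in H_k]$ for $k\in\{1,3\}$ and $[x\in H_k]\le[y\in H_k]$ for $k\in\{2,4\}$, combined with the fact that both $x$ and $y$ belong to exactly one of $\{H_1,H_3\}$ and one of $\{H_2,H_4\}$; the second part is a direct check. An induction on sequence length then bounds the length of any strictly monotone sequence by $\sum_{t}\min(|A_1\cap B_2\cap t|,|A_2\cap B_1\cap t|)=s$, matching the algorithm's output and establishing correctness.

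\medskip

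The main obstacle will be justifying the within-type classification uniformly: we must ensure that every element ever appearing in a strictly monotone exchange from $(A_1,A_2)$ belongs to one of the four types, and that tightness is indeed preserved step by step so that the hyperedge constraints continue to dictate the exchange structure. This relies crucially on $A_1$ and $A_2$ themselves being tight with respect to the relevant hyperedges (Claim~\ref{cl:tight}), so that the type structure applies at every intermediate state of every strictly monotone trajectory, not merely the particular one produced by the algorithm.
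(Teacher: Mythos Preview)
Your proposal is correct and follows essentially the same route as the paper: reduce to the connected case, run the greedy/Claim~\ref{cl:schwarcz} loop, and then argue optimality by showing (via induction starting from the tightness of $A_1,A_2$ given by Claim~\ref{cl:tight}) that in \emph{any} strictly monotone sequence each exchanged pair must share the same membership pattern in $\{H_1,H_2,H_3,H_4\}$, which limits its length to what the algorithm achieves. Your final accounting via $\sum_t\min(|A_1\cap B_2\cap t|,|A_2\cap B_1\cap t|)=s$ is a clean per-type formulation of the paper's count based on the $d$-element imbalance in $H_{1,4}\cup H_{2,3}$; the two computations are equivalent since, by Claims~\ref{cl:symdiff} and~\ref{cl:schwarcz}, each of the four type counts differs by exactly $d/2$ across $A_1\cap B_2$ and $A_2\cap B_1$.
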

\begin{proof}
We follow the proof and notations of Theorem~\ref{thm:main}. Similarly to the proof there, we may assume that $M$ is connected and $A_1\cap A_2=\emptyset$. We show that if the length of $\ul{x}$ and $\ul{y}$ cannot be increased as in Claim~\ref{cl:schwarcz}, then they correspond to a longest strictly monotone sequence of symmetric exchanges. Let $\ul{x}'' = (x''_1, \dots, x''_t)$ and $\ul{y}'' = (y''_1, \dots, y''_t)$ be arbitrary sequences corresponding to a strictly monotone sequence of symmetric exchanges for $(A_1, A_2)$ and $(B_1, B_2)$. Our goal is to show that the length of $\ul{x}''$ is at most the length of $\ul{x}$.

We claim that $x''_i$ and $y''_i$ are contained in the same hyperedges from $\{H_1, H_2, H_3, H_4\}$ for each $1\le i \le t$. Indeed, assume that this holds for each pair $x''_j,y''_j$ with $1\leq j\leq i-1$. Then $(A_1-\{x''_1, \dots, x''_{i-1}\})\cup \{y''_1, \dots, y''_{i-1}\}$ is $H_1$- and $H_3$-tight and $(A_2-\{y''_1,\dots, y''_{i-1}\})\cup \{x''_1, \dots, x''_{i-1}\}$ is $H_2$- and $H_4$-tight, since $A_1$ is $H_1$- and $H_3$-tight and $A_2$ is $H_2$ and $H_4$-tight by Claim~\ref{cl:tight}. As $(A_1\cap B_2) \cup (A_2 \cap B_1) \subseteq (H_1 \triangle H_3) \cap (H_2 \triangle H_4)$ by Claims~\ref{cl:symdiff} and \ref{cl:schwarcz}, $x''_i \in H_{k,\ell}$ and $y''_i \in H_{k',\ell'}$ holds for some $k,k'\in \{1,3\}$ and $\ell,\ell'\in \{2,4\}$. The facts that $(A_1-\{x''_1, \dots, x''_i\})\cup \{y''_1, \dots, y''_i\}$ is a basis, $(A_1-\{x''_1, \dots, x''_{i-1}\})\cup \{y''_1, \dots, y''_{i-1}\}$ is $H_{k'}$-tight, and $y''_i \in H_{k'}$, together imply that $x''_i \in H_{k'}$, that is, $k=k'$. Similarly, it follows from $(A_2-\{y''_1,\dots, y''_i\})\cup \{x''_1,\dots, x''_i\}$ being a basis that $\ell=\ell'$. Therefore $x''_i$ and $y''_i$ are indeed contained in the same hyperedges from $\{H_1,H_2,H_3,H_4\}$ as claimed.

By the above, $|\{x''_1, \dots, x''_t\} \cap (H_{1,4} \cup H_{2,3})| = |\{y''_1, \dots, y''_t\} \cap (H_{1,4} \cup H_{2,3})|$. Since $|(A_2 \cap B_1) \cap (H_{1,4} \cup H_{2,3})| = |(A_1 \cap B_2) \cap (H_{1,4} \cup H_{2,3})|+d$ holds by Claims~\ref{cl:symdiff} and \ref{cl:schwarcz}, it follows that $|(A_2 \cap B_1) - \{x''_1, \dots, x''_t\}| \ge d$, thus the length of $\ul{x}''$ is at most the length of $\ul{x}$, concluding the proof of the theorem.
\end{proof}

A matroid $M$ with basis-family $\cB$ is \textbf{base orderable} if for any two bases $A,B\in\cB$ there exists a bijection $\varphi\colon A\to B$ such that $A-e+\varphi(e)\in\cB$ and $B-\varphi(e)+e \in \cB$ for every $e\in A$. Base orderable matroids are interesting and important because we have a fairly good global understanding of their structure, while frustratingly little is known about the general case. 

The proof of Theorem~\ref{thm:main} implies that there exists a `long' strictly monotone sequence of symmetric exchanges in split matroids.  

\begin{thm} \label{thm:monotone} Let $(A_1, A_2)$ and $(B_1,B_2)$ be compatible basis pairs of a rank-$r$ matroid $M$. Then there exists a strictly monotone symmetric exchange sequence of length
\begin{enumerate}[label={(\alph*)}] \itemsep0em
    \item $r-3|A_1 \cap B_1|$, if $M$ is a split matroid, \label{it:split}
    \item $r-2|A_1 \cap B_1|$, if $M$ is a base orderable split matroid, \label{it:bo}
    \item $r-|A_1 \cap B_1|-2$, if $M$ is a paving matroid. \label{it:paving}
\end{enumerate}
\end{thm}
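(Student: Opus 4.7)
The plan is to follow the proof and notation of Theorem~\ref{thm:main}. After its initial reductions we may assume $A_1\cap A_2=\emptyset$ and $M$ connected, so $|A_1|=r$. The greedy strictly monotone sequence $\ul{x},\ul{y}$ constructed there, once it cannot be extended further via Claim~\ref{cl:schwarcz}, has length $s=r-|A_1\cap B_1|-d$ where $d=|A'_1-B_1|=|A'_1\cap B_2|$; this identity holds because $A'_1\cap B_1=(A_1\cap B_1)\cup\{y_1,\dots,y_s\}$ is a disjoint union of size $|A_1\cap B_1|+s$, so $d=r-|A'_1\cap B_1|=r-|A_1\cap B_1|-s$. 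Each part of the theorem thus reduces to an appropriate upper bound on $d$. Part (a) is immediate: Claim~\ref{cl:d} already gives $d\le 2|A_1\cap B_1|$, hence $s\ge r-3|A_1\cap B_1|$.

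For (c), in a paving matroid every hyperedge in a non-redundant representation has $r_i=r-1$, so \ref{eq:h1} forces $|H_1\cap H_3|\le r-2$. Applying Lemma~\ref{lem:tight}\ref{it:f} to the $H_1$- and $H_3$-tight basis $A'_1$ of size $r$ then gives $H_1\cap H_3\subseteq A'_1\subseteq H_1\cup H_3$ with $|H_1\cap H_3|=r-2$, so $|A'_1\cap(H_1-H_3)|=1$. Since $H_{1,2}\subseteq H_1-H_3$, Claim~\ref{cl:symdiff} yields $|A'_1\cap B_2\cap H_{1,2}|\le 1$, which equals $d/2$ by Claim~\ref{cl:tight}. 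Hence $d\le 2$, and $s\ge r-|A_1\cap B_1|-2$.

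Part (b) is the most delicate; here the aim is to sharpen the conclusion of Claim~\ref{cl:d} from $d\le 2|A_1\cap B_1|$ to $d\le |A_1\cap B_1|$. The factor of $2$ in Claim~\ref{cl:d} originates from a single trivial estimate $|(A_1\cap B_2)\cap(H_1\cap H_2)|\le |H_1\cap H_2|$ in the chain of inequalities there; the plan is to replace it by $|(A_1\cap B_2)\cap(H_1\cap H_2)|\le |H_1\cap H_2|-d/2$ by exhibiting $d/2$ elements of $H_1\cap H_2$ that lie outside $A_1\cap B_2$. To produce them, the plan is to combine base orderability of $M$, which supplies a bijection $\psi\colon A_1\to A_2$ with $A_1-e+\psi(e)\in\cB$ and $A_2-\psi(e)+e\in\cB$ for every $e\in A_1$, with the rigid four-hyperedge configuration of Claim~\ref{cl:h}. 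A pigeonhole count on $\psi$ already shows that at least $r-2|A_1\cap B_1|$ elements of $A_1\cap B_2$ are sent into $A_2\cap B_1$; the main obstacle will be to use the absence of an $M(K_4)$ minor (a consequence of base orderability) to force $d/2$ of the corresponding partner elements to actually lie inside $H_1\cap H_2$, producing the required slack and hence $d\le |A_1\cap B_1|$ and $s\ge r-2|A_1\cap B_1|$.
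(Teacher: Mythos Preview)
Your treatment of parts \ref{it:split} and \ref{it:paving} is correct and essentially matches the paper's proof: both reduce to bounding $d$, and your arguments (Claim~\ref{cl:d} for (a); the $r_i=r-1$ computation giving $|A'_1-H_3|=1$ for (c)) coincide with what the paper does.

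Part \ref{it:bo}, however, is not a proof but an unfinished plan. You propose to sharpen the chain in Claim~\ref{cl:d} by producing $d/2$ elements of $H_1\cap H_2$ outside $A_1\cap B_2$, and you say that ``the main obstacle will be to use the absence of an $M(K_4)$ minor'' to force this. That step is never carried out, and it is not clear how excluded minors would yield such elements: the $M(K_4)$-freeness of base orderable matroids is a qualitative structural fact, not a counting statement about how a fixed bijection interacts with four specific hyperedges. Also, working with a bijection $\psi\colon A_1\to A_2$ wastes the structure you have already built: the sets $A'_1\cap B_2$ and $A'_2\cap B_1$ are the ones confined to $H_{1,2}\cup H_{3,4}$ and $H_{1,4}\cup H_{2,3}$, whereas the full $A_1\cap B_2$ is not.

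The paper avoids all of this by choosing the bijection more carefully. It applies base orderability to the pair $A'_1,A_2$ (not $A_1,A_2$), obtaining $\varphi\colon A'_1\to A_2$ that is the identity on $A'_1\cap A_2=\{y_1,\dots,y_s\}$ and hence bijects $A'_1-A_2=(A_1\cap B_1)\cup(A'_1\cap B_2)$ onto $A_2-A'_1=(A_2\cap B_2)\cup(A'_2\cap B_1)$. For $e\in A'_1\cap B_2\cap H_{1,2}$, the $H_3$-tightness of $A'_1$ forces $\varphi(e)\notin H_3$ (since $A'_1-e+\varphi(e)$ is a basis and $e\notin H_3$), and the $H_2$-tightness of $A_2$ forces $\varphi(e)\in H_2$ (since $A_2+e-\varphi(e)$ is a basis and $e\in H_2$); thus $\varphi(e)\in H_2-H_3$, which rules out $\varphi(e)\in A'_2\cap B_1\subseteq H_{1,4}\cup H_{2,3}$, so $\varphi(e)\in A_2\cap B_2$. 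The symmetric argument handles $e\in A'_1\cap B_2\cap H_{3,4}$. Hence $\varphi$ injects $A'_1\cap B_2$ into $A_2\cap B_2$, giving $d=|A'_1\cap B_2|\le|A_2\cap B_2|=|A_1\cap B_1|$ directly, with no appeal to excluded minors or to the chain in Claim~\ref{cl:d}.
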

\begin{proof}
We follow the proof and notations of Theorem~\ref{thm:main}. Similarly to the proof there, we may assume that $M$ is connected and $A_1 \cap A_2 = \emptyset$. Furthermore, assume that there is no (strictly monotone) symmetric exchange sequence of length $r-|A_1 \cap B_1|$ for $(A_1,A_2)$ and $(B_1, B_2)$ as otherwise we are done. Then $\ul{x}$ and $\ul{y}$ define a strictly monotone symmetric exchange sequence of length $r-|A_1\cap B_1|-d$, hence \ref{it:split} follows by the inequality $d\le 2|A_1 \cap B_1|$ of Claim~\ref{cl:d}.  

If $M$ is base orderable, then let $\varphi\colon A'_1 \to A_2$ be a bijection such that $A'_1-e+\varphi(e)$ and $A_2+e-\varphi(e)$ are bases for each $e\in A'_1$. It follows from the definition of $\varphi$ that it is identical on $A'_1 \cap A_2 = \{y_1, \dots, y_s\}$. Note that $A'_1 - A_2 = (A_1 \cap B_1) \cup (A'_1 \cap B_2)$ and $A_2 - A'_1 = (A_2 \cap B_2) \cup (A'_2 \cap B_1)$ where $A'_1 \cap B_2 \subseteq H_{1,2} \cup H_{3,4}$ and $A'_2 \cap B_1 \subseteq H_{1,4} \cup H_{2,3}$ by Claim~\ref{cl:symdiff}. For each $e \in A'_1 \cap H_{1,2}$, we have $f(e) \in H_2-H_3$ since $A'_1$ is $H_3$-tight and $A_2$ is $H_2$-tight by Claim~\ref{cl:tight}. Similarly, for each $e \in A'_1 \cap H_{3,4}$, we have $f(e) \in H_4-H_1$. It follows that $f(e) \in A_2 \cap B_2$ for each $e \in A'_1 \cap B_2$, thus $d = |A'_1 \cap B_2| \le |A_2 \cap B_2| = |A_1 \cap B_1|$, proving \ref{it:bo}.

If $M$ is paving, then the $H_3$-tightness of $A'_1$ implies $|A'_1\cap H_3|=r-1$. By Claims~\ref{cl:symdiff} and \ref{cl:tight}, we get $d/2 = |(A'_1 \cap B_2) \cap H_{1,2}| \le |A'_1- H_3| = 1$, proving \ref{it:paving}.
\end{proof}

\section{Conclusion}

In this paper, we studied the exchange distance of basis pairs in split matroids. We proved that for any two compatible basis pairs, there exists a sequence of symmetric exchanges that transforms one into the other, and that the length of this sequence is at most one more than the trivial lower bound. The proof was algorithmic, hence the sequence of exchange in question can be determined in polynomial time. As a corollary, we verified several long-standing conjectures for the class of split matroids, and also provided lower bounds on the maximum length of strictly monotone symmetric exchange sequences.

A lesser known conjecture of Hamidoune states that the exchange distance of compatible basis pairs is at most the rank of the matroid, see~\cite{cordovil1993bases}. However, until now, the conjecture was open even for sparse paving matroids. The statement might seem rather optimistic at first glance, still, our result settles it for split matroids. It remains an intriguing open question whether the conjecture holds in general; such a result would immediately imply Conjecture~\ref{conj:gabow}, Conjecture~\ref{conj:white} for sequences of length two and Conjecture~\ref{conj:equitable}.

\section*{Acknowledgement} The work was supported by the Lend\"ulet Programme of the Hungarian Academy of Sciences -- grant number LP2021-1/2021 and by the Hungarian National Research, Development and Innovation Office -- NKFIH, grant numbers FK128673 and TKP2020-NKA-06.

\bibliographystyle{abbrv}
\bibliography{split_conjectures}

\end{document}